\newcommand{\wh}{\widehat}                  
\newtheorem*{thmMain}{Main Theorem}
\newtheorem*{corMain}{Corollary for Knots}
\newtheorem{thm}{Theorem}[section]
\newtheorem{lem}[thm]{Lemma}
\newtheorem{prop}[thm]{Proposition}
\newtheorem{cor}[thm]{Corollary}
\theoremstyle{definition}
\newtheorem{defn}[thm]{Definition}
\newtheorem{exam}[thm]{Example}
\theoremstyle{remark}
\newtheorem{rem}[thm]{Remark}
\newtheorem*{ack}{Acknowledgements}
\begin{document}

\title     [
                Conway potential function for colored links
           ]
           {
                On Conway's potential function
\\
                for colored links
           }
\author    {
                             Boju Jiang
           }
\address   {
                      Department of Mathematics\\
                      Peking University\\
                      Beijing  100871\\
                      China
           }
\email     {
                      bjjiang@math.pku.edu.cn
           }
\thanks    {Partially supported by NSFC grant \#11131008}

\subjclass [2010]{Primary 57M25; Secondary 20F36}

\keywords {colored links, Conway potential function, Alexander polynomial, skein relations}

\date{}

\begin{abstract}
The Conway potential function (CPF) for colored links is a convenient version
of the multi-variable Alexander-Conway polynomial.
We give a skein characterization of CPF, much simpler than the one by Murakami.
In particular, Conway's `smoothing of crossings' is \emph{not\/} in the axioms.
The proof uses a reduction scheme in a twisted group-algebra $\mathbb P_nB_n$,
where $B_n$ is a braid group and $\mathbb P_n$ is a domain of multi-variable rational fractions.
The proof does not use computer algebra tools.
An interesting by-product is a characterization of the Alexander-Conway polynomial of knots.
\end{abstract}

\maketitle

\section {Introduction}

A \emph{colored link\/} is an oriented link $L=L_1\cup\dots\cup L_\mu$ in $S^3$ together with
a map $\{1,\dots,\mu\}\to\mathbf T$ assigning to each component $L_i$ a color label $t_i\in\mathbf T$.
Here $\mathbf T$ denotes the set of color symbols which will also be regarded
as independent variables in polynomials and rational fractions.
Different components of a link are allowed to share a color.
Two colored links $L$ and $L'$ are \emph{isotopic\/} if there exists an ambient isotopy
from $L$ to $L'$ which respects the color and orientation of each component.

The Alexander polynomial $\Delta_L(t_1,\dots,t_n) \in \mathbb Z[\mathbf T^{\pm1}]$,
introduced by Alexander \cite{A1} in 1928, is an invariant for colored links with $n$-colors.
Here $\mathbb Z[\mathbf T^{\pm1}]$ is the ring of Laurent polynomials with variables in $\mathbf T$, and integer coefficients.
The Alexander polynomial is defined only up to multiplication by a monomial.

The Conway potential function (CPF) of a colored link $L$
is a \emph{well-defined} rational fraction $\nabla_L(t_1,\dots,t_n)$ with variables in $\mathbf T$
that is related to the Alexander polynomial in the following way:
\[
\nabla_L(t_1,\dots,t_n)=
\begin{cases}
\displaystyle\frac{\Delta_L(t^2)}{t-t^{-1}}, & \text{if $n=1$};
\\
\Delta_L(t_1^2,\dots,t_n^2), & \text{if $n>1$}.
\end{cases}
\]
This equality is used to remove the ambiguity in the Alexander polynomial,
to be called the Alexander-Conway polynomial.
CPF was introduced by Conway \cite{C2} in 1970,
but without an explicit model until 1983 by Hartley \cite{H1}.
When two colors $t_i$ and $t_j$ merge,
simply identify the variables $t_i$ and $t_j$ in $\nabla_L(t_1,\dots,t_n)$.

Our main result is

\begin{thmMain}
The Conway potential function $\nabla_L$ is
the invariant of colored links
determined uniquely by the following five axioms,
where $t_i,t_j,t_k$ are arbitrary color symbols, repetition allowed
(the labels $i,j,k$ in diagrams are shorthand for $t_i,t_j,t_k$).
{\allowdisplaybreaks
\begin{gather*}
\nabla
\begin{pmatrix}
\includegraphics[width=.8cm]{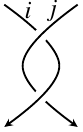}
\end{pmatrix}
+\nabla
\begin{pmatrix}
\includegraphics[width=.8cm]{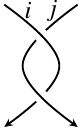}
\end{pmatrix}
=(t_it_j+t_i^{-1}t_j^{-1})
\cdot\nabla
\begin{pmatrix}
\includegraphics[width=.8cm]{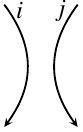}
\end{pmatrix} ;
\tag*{\rm(II)}
\\
\begin{align*}
&(t_i^{-1}t_j^{-1}-t_it_j)
\left\{\nabla
\begin{pmatrix}
\includegraphics[width=1.2cm]{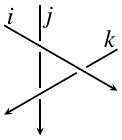}
\end{pmatrix}
+\nabla
\begin{pmatrix}
\includegraphics[width=1.2cm]{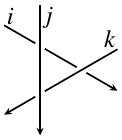}
\end{pmatrix}
\right\}
\tag*{\rm(III)}
\\
&+(t_jt_k-t_j^{-1}t_k^{-1})
\left\{\nabla
\begin{pmatrix}
\includegraphics[width=1.2cm]{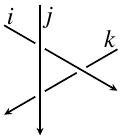}
\end{pmatrix}
+\nabla
\begin{pmatrix}
\includegraphics[width=1.2cm]{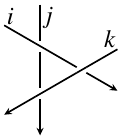}
\end{pmatrix}
\right\}
\\
&+(t_it_k^{-1}-t_i^{-1}t_k)
\left\{\nabla
\begin{pmatrix}
\includegraphics[width=1.2cm]{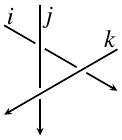}
\end{pmatrix}
+\nabla
\begin{pmatrix}
\includegraphics[width=1.2cm]{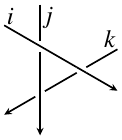}
\end{pmatrix}
\right\}
 =0 ;
\end{align*}
\\
\nabla
\begin{pmatrix}
\includegraphics[height=1.2cm]{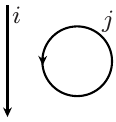}
\end{pmatrix}
=0 ;
\tag*{\rm(IO)}
\\
\nabla
\begin{pmatrix}
\includegraphics[height=1.2cm]{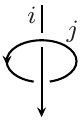}
\end{pmatrix}
=(t_i-t_i^{-1})\cdot
\nabla
\begin{pmatrix}
\;\;
\includegraphics[height=1.2cm]{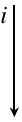}
\;\;
\end{pmatrix} ;
\tag*{($\Phi$)}
\\
\nabla
\begin{pmatrix}
\includegraphics[width=1.35cm]{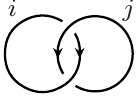}
\end{pmatrix}
=1
\tag*{\rm(H)} .
\end{gather*}
}
\end{thmMain}

This is a characterization of the Conway potential function by skein relations.
It provides a pedestrian's approach to the main conclusions of
the paper \cite{M2} which is hard to read.
Murakami's state model for CPF (Theorem~5.3 of that paper) can be validated
by checking our axioms in a straightforward way.

Cimasoni's model of CPF \cite[Theorem]{C1} can also be validated by our axioms
instead of Murakami's.
In fact, it was this nice geometric model that first led us to
local relations such as (III) and (III$_8$) (see below).

A comparison between our five axioms and Murakami's six axioms \cite[page~126]{M2}:

(A) Conway's ``smoothing of same-color crossings" relation
\[
\nabla
\begin{pmatrix}
\includegraphics[width=.8cm]{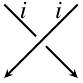}
\end{pmatrix}
-\nabla
\begin{pmatrix}
\includegraphics[width=.8cm]{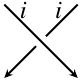}
\end{pmatrix}
=(t_i-t_i^{-1})
\cdot\nabla
\begin{pmatrix}
\includegraphics[width=.8cm]{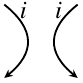}
\end{pmatrix}
\tag*{\rm(I)}
\]
is used by Murakami as his Axiom (1), but is absent in our characterization.

(B) We emphasize that repetition of color labels are allowed in our axioms,
because we have no other means to control same-color crossings.

(C) Our three-string Axiom (III) is simpler than Murakami's Axiom (3),
see Example~\ref{exam:(III_7)} below.

(D) Our Axioms (II), (IO) and ($\Phi$) are the same as Murakami's Axioms (2), (6) and (5), respectively.
In the presence of Axiom ($\Phi$), our Axiom (H) (normalization at the Hopf link) is equivalent to
Murakami's Axiom (4) (normalization at the trivial knot).
So this difference is minor.

The point (A) above is remarkable.
As its consequence, we have
\begin{corMain}
\label{cor:knots}
The Alexander-Conway polynomial $\Delta_K \in \mathbb Z[t^{\pm1}]$
for knots is the invariant of knots determined uniquely by the following three axioms.
{\allowdisplaybreaks
\begin{gather*}
\Delta
\begin{pmatrix}
\includegraphics[width=.8cm]{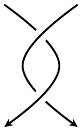}
\end{pmatrix}
+\Delta
\begin{pmatrix}
\includegraphics[width=.8cm]{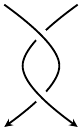}
\end{pmatrix}
=(t+t^{-1})
\cdot\Delta
\begin{pmatrix}
\includegraphics[width=.8cm]{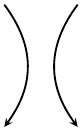}
\end{pmatrix} ;
\tag*{\rm(II$_\text{K}$)}
\\
\Delta
\begin{pmatrix}
\includegraphics[width=1.2cm]{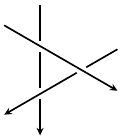}
\end{pmatrix}
+\Delta
\begin{pmatrix}
\includegraphics[width=1.2cm]{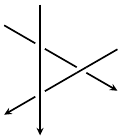}
\end{pmatrix}
=
\Delta
\begin{pmatrix}
\includegraphics[width=1.2cm]{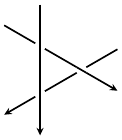}
\end{pmatrix}
+\Delta
\begin{pmatrix}
\includegraphics[width=1.2cm]{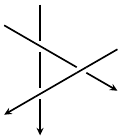}
\end{pmatrix} ;
\tag*{\rm(III$_\text{K}$)}
\\
\Delta
\begin{pmatrix}
\includegraphics[width=.8cm]{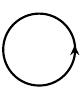}
\end{pmatrix}
=1 .
\tag*{\rm(O)}
\end{gather*}
}
\end{corMain}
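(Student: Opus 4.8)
The plan is to derive the corollary from the Main Theorem. For existence, recall that for a knot $K$ the Alexander--Conway polynomial is defined by $\Delta_K(t^2)=(t-t^{-1})\,\nabla_K(t)$, so by the Main Theorem it is a well-defined knot invariant, and it lies in $\mathbb Z[t^{\pm1}]$ by the classical fact that $\Delta_K$ is an integral Laurent polynomial. I would then set all colours equal, $t_i=t_j=t_k=t$, in the five colored axioms. The coefficient $t_it_j+t_i^{-1}t_j^{-1}$ of (II) becomes $t^2+t^{-2}$; in (III) the first two coefficient-blocks become $\mp(t^2-t^{-2})$ while the third, $t_it_k^{-1}-t_i^{-1}t_k$, vanishes --- which is why (III$_\text{K}$) has four terms rather than six. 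The two-string move of (II) replaces the tangle $\sigma^2$ by $\sigma^{-2}$ or by the trivial tangle, and the three-string move of (III) only exchanges braid words inducing the same permutation; each such move therefore preserves the number of components, so when it is applied inside (the diagram of) a knot every term is again a knot. Dividing each occurrence of $\nabla$ by the common factor $t-t^{-1}$ and renaming the variable $t^2$ as $t$ then turns (II) into (II$_\text{K}$) and (III) into (III$_\text{K}$), while $\Delta_{\text{unknot}}=1$ --- a standard consequence of the colored axioms, obtained along the way in the proof of the Main Theorem --- is (O).

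For uniqueness, I would re-run the reduction scheme of the Main Theorem with its inputs restricted to (II$_\text{K}$), (III$_\text{K}$) and (O). Present the knot as the closure of a braid $\beta\in B_n$ (Alexander's theorem) and work, as in the Main Theorem's proof, in the twisted group algebra $\mathbb P_n B_n$ over the one-variable fraction field. That proof rewrites $\beta$ as a $\mathbb P_n$-combination of reduced braids by repeated use of the local relations together with the Markov moves; for a one-coloured diagram only the \emph{monochrome} instances of (II) and (III), namely (II$_\text{K}$) and (III$_\text{K}$), can ever be invoked, and by the component-count observation above every braid produced along the way still closes to a knot. Hence the reduction bottoms out at copies of the unknot, on which the value is fixed by (O); so any two knot invariants satisfying (II$_\text{K}$), (III$_\text{K}$), (O) coincide on every knot.

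The step I expect to be the main obstacle is recovering, from (II$_\text{K}$), (III$_\text{K}$) and (O) alone, the structural facts the colored proof borrows from the extra axioms (IO), ($\Phi$) and (H). Axioms (IO) and (H) concern honest links and are simply never needed once the reduction stays inside the class of knots, where the normalization at the Hopf link is replaced by (O). The delicate point is the analogue of ($\Phi$): invariance under adding a kink, equivalently Markov stabilization $\beta\mapsto\beta\sigma_n^{\pm1}$. I would extract it by capping off the third string of the three-string relation (III$_\text{K}$) so that its $\sigma_2$-crossings become curls; (III$_\text{K}$) then reads as a linear identity among a knot with a positive curl, the same knot with a negative curl, and the knot with no curl, and combining it with (II$_\text{K}$) to clear the leftover double crossing yields the desired kink-invariance of $\Delta$. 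With this Reidemeister-I behaviour and $\Delta_{\text{unknot}}=1$ in hand, the reduction machinery of the Main Theorem applies verbatim with all colours equal, and the corollary follows.
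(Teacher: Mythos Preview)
Your approach is the paper's, and the existence half together with the component-count observation is correct. The error is in your last paragraph: you have misread axiom~($\Phi$). It is \emph{not} about a kink or about Markov stabilization; the figure shows a strand with a \emph{separate} small circle linked around it, not a curl on the strand itself. Thus ($\Phi$), like (IO), relates links whose numbers of components differ by one. Your plan to ``cap off the third string'' of (III$_\text{K}$) to derive kink-invariance is therefore aimed at the wrong target, and is in any case unnecessary: $\Delta$ is \emph{assumed} to be an invariant of knots, so Reidemeister~I invariance---equivalently, invariance under the Markov move $\beta\mapsto\beta\sigma_n^{\pm1}$---is already built in.

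What you are missing is the concrete reason ($\Phi$) is never invoked for knots. In the induction of Section~\ref{sec:proof_Main}, Lemma~\ref{lem:reduction} rewrites $\beta$ as a $\mathbb P_n$-combination of braids $\alpha\sigma_{n-1}^{k}\gamma$ with $\alpha,\gamma\in B_{n-1}$ and $k\in\{0,\pm1,2\}$, all sharing the underlying permutation of $\beta$. Since $\alpha,\gamma$ fix $n$, the braid $\alpha\sigma_{n-1}^{k}\gamma$ fixes $n$ exactly when $k$ is even. If $\wh\beta$ is a knot the permutation of $\beta$ is an $n$-cycle and does not fix $n$, so only $k=\pm1$ can occur; the $k=0$ case (handled by (IO)) and the $k=2$ case (handled by ($\Phi$)) simply never arise. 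The surviving $k=\pm1$ terms are disposed of by Markov destabilization, an isotopy, hence respected by any knot invariant. Finally, the divisions in Lemma~\ref{lem:key_reduction} are by elements invertible in $\mathbb P_n$, i.e.\ not killed by $\rho$, so they remain valid after setting all colors equal to $t$; the monochrome specializations of (II$_\text{B}$) and (III$_\text{B}$) are exactly (II$_\text{K}$) and a nonzero scalar times (III$_\text{K}$). That is the whole argument---no analogue of ($\Phi$) needs to be recovered.
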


Note that the relation (III$_\text{K}$) is different from Conway's four-term relation
(cf.\ Corollary~\ref{cor:relation(III_4)&(III_8)}).

It is well known that the uncolored Alexander-Conway polynomial is
characterized by the classical Conway relation
\[
\Delta
\begin{pmatrix}
\includegraphics[width=.8cm]{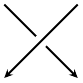}
\end{pmatrix}
-\Delta
\begin{pmatrix}
\includegraphics[width=.8cm]{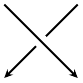}
\end{pmatrix}
=(t^{\frac12}-t^{-\frac12})
\cdot\Delta
\begin{pmatrix}
\includegraphics[width=.8cm]{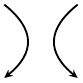}
\end{pmatrix}
\tag*{\rm(C)}
\]
and (O). The two sides of (C) always have different number of components.
The point here is that we now have a characterization within the realm of knots.

The structure of the paper is as follows.
New skein relations are gathered in Section~\ref{sec:skeins}.
The proof in Section~\ref{sec:proof_skein} is based on the original model of CPF in Hartley~\cite{H1}.
In Section~\ref{sec:braids} we develop the language of colored braids
and interpret skein relations in the twisted group algebra $\mathbb P_nB_n$
over a domain $\mathbb P_n$ of multi-variable rational fractions.
An algebraic reduction lemma in $\mathbb P_nB_n$ in Section~\ref{sec:key_lem}
is the key to the proof of our main results in Section~\ref{sec:proof_Main}.

\begin{ack}
The author thanks Hao Zheng for helpful discussions.
He also thanks the referee for questions that prompted clarification of several points in the exposition.
\end{ack}

\section {Some `triangular' skein relations for CPF}
\label{sec:skeins}

In the theorem below we consider link projections that are identical
except within a disk where they differ in a specific way.
The proof is postponed to the next section, after a discussion of corollaries.

\begin{thm}
\label{thm:relation(III_6)}
The Conway potential function satisfies the following skein relation,
where $t_i,t_j,t_k$ are arbitrary color symbols, repetition allowed,
$i,j,k$ are just shorthand for them in diagrams.
{\allowdisplaybreaks
\begin{align*}
&(t_i^{-1}t_j^{-1}-t_it_j)
\left\{\nabla
\begin{pmatrix}
\includegraphics[width=1.2cm]{fig_3icons/1b2b1a_ddd}
\end{pmatrix}
+\nabla
\begin{pmatrix}
\includegraphics[width=1.2cm]{fig_3icons/1a2a1b_ddd}
\end{pmatrix}
\right\}
\tag*{\rm(III)}
\\
&+(t_jt_k-t_j^{-1}t_k^{-1})
\left\{\nabla
\begin{pmatrix}
\includegraphics[width=1.2cm]{fig_3icons/1a2b1b_ddd}
\end{pmatrix}
+\nabla
\begin{pmatrix}
\includegraphics[width=1.2cm]{fig_3icons/1b2a1a_ddd}
\end{pmatrix}
\right\}
\\
&+(t_it_k^{-1}-t_i^{-1}t_k)
\left\{\nabla
\begin{pmatrix}
\includegraphics[width=1.2cm]{fig_3icons/1a2a1a_ddd}
\end{pmatrix}
+\nabla
\begin{pmatrix}
\includegraphics[width=1.2cm]{fig_3icons/1b2b1b_ddd}
\end{pmatrix}
\right\}
 =0.
\end{align*}
}
\end{thm}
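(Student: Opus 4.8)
The plan is to derive (III) directly from Hartley's model of $\nabla_L$ \cite{H1}, following the pattern ``localize, expand a determinant, and compare local data''. Fix the disk $D$ inside which the six diagrams differ, so that outside $D$ they all restrict to a common tangle $E$. Reading the pictures, the six inner $3$-braids are $\sigma_1^{-1}\sigma_2^{-1}\sigma_1$, $\sigma_1\sigma_2\sigma_1^{-1}$, $\sigma_1\sigma_2^{-1}\sigma_1^{-1}$, $\sigma_1^{-1}\sigma_2\sigma_1$, $\sigma_1\sigma_2\sigma_1$, $\sigma_1^{-1}\sigma_2^{-1}\sigma_1^{-1}$; each of these induces the \emph{same} permutation $(1\,3)$ on $\partial D$, so the three strands carry a consistent assignment of the colors $t_i,t_j,t_k$ and all six are honestly links that agree outside $D$. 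First I would choose, for each link $L_m$, a Wirtinger presentation adapted to $D$: the generators of the arcs meeting $\partial D$, together with all generators and relations coming from $E$, are shared by the six presentations, and only the three interior crossings produce data depending on $m$. Hartley's formula then writes $\nabla_{L_m}$ as a fixed unit, depending only on $E$ and on the arc chosen for deletion and hence the \emph{same} for all six, times the determinant of an Alexander matrix $A_m$ produced by Fox calculus and the CPF substitution $x\mapsto t_{c(x)}^{\,2}$. Because the presentation is adapted to $D$, every row and column of $A_m$ is independent of $m$ except for one block $B_m$ recording the three interior crossings.

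The second step is a generalized cofactor (Laplace) expansion of $\det A_m$ along the rows and columns of $B_m$. Since the interior Wirtinger relations involve only interior and boundary generators, all entries of $A_m$ outside $B_m$ are independent of $m$, so the expansion exhibits $\det A_m$ as one and the same linear combination $\sum_S \pm\,\mu_S(B_m)\,\nu_S$ of minors $\mu_S(B_m)$ of $B_m$ with $m$-independent cofactors $\nu_S$. Hence (III) will follow once we verify, for each index $S$ separately, the scalar identity
\begin{align*}
&(t_i^{-1}t_j^{-1}-t_it_j)\{\mu_S(B_1)+\mu_S(B_2)\} \\
&\quad+(t_jt_k-t_j^{-1}t_k^{-1})\{\mu_S(B_3)+\mu_S(B_4)\} \\
&\quad+(t_it_k^{-1}-t_i^{-1}t_k)\{\mu_S(B_5)+\mu_S(B_6)\}=0 ,
\end{align*}
a finite list of polynomial identities in $t_i,t_j,t_k$. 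Here $B_1,\dots,B_6$ are the interior blocks for the six braid words above, each a product of three elementary crossing matrices of colored-Burau type; note that the three braces pair each braid with its mirror image, which is why the coefficients occur in the antipalindromic form $u^{-1}-u$.

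To prove these identities I would exploit two structural properties of the elementary crossing matrices. First, the matrix of a positive crossing has the form $I+R$ with $R$ of rank one, so by the Sherman--Morrison formula the matrix of the corresponding negative crossing equals $I-\lambda R$ for a scalar $\lambda$ (a monomial in the colors); thus the negative-crossing matrix is an \emph{affine} function of the positive one, and this is exactly the feature that makes a ``smoothing'' term unnecessary. Second, the three-string matrices satisfy the braid relation $C(\sigma_1)C(\sigma_2)C(\sigma_1)=C(\sigma_2)C(\sigma_1)C(\sigma_2)$. Substituting the first relation into each $B_m$ rewrites every minor $\mu_S(B_m)$ as a polynomial in a small fixed set of coordinates --- the entries and $2\times2$ minors of $C(\sigma_1)$, $C(\sigma_2)$ and of their product --- and the required identities then become a short computation whose solvability pins down precisely the coefficients $t_i^{-1}t_j^{-1}-t_it_j$, $t_jt_k-t_j^{-1}t_k^{-1}$, $t_it_k^{-1}-t_i^{-1}t_k$ of (III). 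The compatibility of the whole relation with the mirror symmetry relating the two diagrams in each brace gives a running consistency check.

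I expect the main obstacle to be the first step: making Hartley's model concrete enough that the six Alexander matrices genuinely share all rows and columns except the single interior block, with one common prefactor, so that the three crossing signs enter only through $B_m$. Once this block decomposition is in hand, the determinant expansion is automatic and the concluding polynomial identities are mechanical.
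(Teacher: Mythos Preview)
Your overall plan---use Hartley's model, set up Wirtinger presentations that agree outside the disk, Laplace-expand along the local block, and check local polynomial identities---is exactly the route the paper takes. But there is one genuine gap in step one.

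You assert that Hartley's formula writes $\nabla_{L_m}$ as a determinant times a unit ``depending only on $E$ and on the arc chosen for deletion and hence the same for all six''. This is false. In Hartley's normalization (formula (2.4) of \cite{H1}) the prefactor is $t_1^{\mu_1}\cdots t_n^{\mu_n}$ with $\mu_i=\kappa_i-\nu_i$, where $\nu_i$ counts the crossings whose \emph{overcrossing} arc has color $i$. Your six local braids $\sigma_1^{\pm1}\sigma_2^{\pm1}\sigma_1^{\pm1}$ have different over/under patterns, so the contribution of the three interior crossings to this count varies with $m$. Concretely, writing $\varepsilon$ for the common exterior factor, one finds
\[
\nabla_1=\varepsilon\,t_i^{-2}t_k^{-1}\delta_1^\phi,\quad
\nabla_2=\varepsilon\,t_j^{-2}t_k^{-1}\delta_2^\phi,\quad
\nabla_3=\varepsilon\,t_i^{-1}t_j^{-2}\delta_3^\phi,\quad\text{etc.},
\]
with six \emph{different} monomials. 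Consequently the identity you must check at each index $S$ is not the one you wrote: each $\mu_S(B_m)$ must be weighted by its own monomial before the three braces are formed. This is easily repaired, but as stated your step two would simply fail.

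Two smaller points. First, the paper inserts an extra curl on one strand inside the disk (and allows more curls outside) precisely to arrange that the local block sits in a fixed set of rows while the deleted row/column are safely outside; you will want a similar device to make your ``all rows and columns except one block are shared'' claim literally true. Second, the paper does not invoke Sherman--Morrison or the braid relation to organize the final check; it simply tabulates the twenty $4\times4$ minors for each of the six local blocks and verifies the resulting linear combination row by row. Your structural approach is appealing, but bear in mind that the local block here comes from Wirtinger/Fox derivatives (rows indexed by relators, columns by arcs), not from a product of colored-Burau crossing matrices, so the ``$I+R$ with $R$ rank one'' description and the multiplicative braid relation do not apply directly to $B_m$.
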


The relation (III) actually has $12$ terms when all brackets are expanded.
In the following corollary, (III$_4$) is Conway's four-term relation which was not given a proof in \cite{H1}.
The eight-term relation (III$_8$) is new.
\begin{cor}
\label{cor:relation(III_4)&(III_8)}
The Conway potential function satisfies the following skein relations:
{\allowdisplaybreaks
\begin{gather*}
\begin{aligned}
\nabla
\begin{pmatrix}
\includegraphics[width=1.2cm]{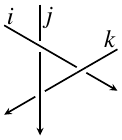}
\end{pmatrix}
+\nabla
\begin{pmatrix}
\includegraphics[width=1.2cm]{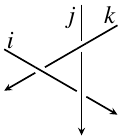}
\end{pmatrix}
&=
\nabla
\begin{pmatrix}
\includegraphics[width=1.2cm]{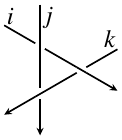}
\end{pmatrix}
+\nabla
\begin{pmatrix}
\includegraphics[width=1.2cm]{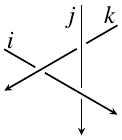}
\end{pmatrix}
,
\end{aligned}
\tag*{(III$_4$)}
\\
\begin{aligned}[t]
& t_it_j\cdot\nabla
\begin{pmatrix}
\includegraphics[width=1.2cm]{fig_3icons/1b2b1a_ddd}
\end{pmatrix}
-t_i^{-1}t_j^{-1}\cdot\nabla
\begin{pmatrix}
\includegraphics[width=1.2cm]{fig_3icons/1a2a1b_ddd}
\end{pmatrix}
\\
&+t_j^{-1}t_k^{-1}\cdot\nabla
\begin{pmatrix}
\includegraphics[width=1.2cm]{fig_3icons/1a2b1b_ddd}
\end{pmatrix}
-t_jt_k\cdot\nabla
\begin{pmatrix}
\includegraphics[width=1.2cm]{fig_3icons/1b2a1a_ddd}
\end{pmatrix}
\\
&+t_i^{-1}t_k\cdot\nabla
\begin{pmatrix}
\includegraphics[width=1.2cm]{fig_3icons/1a2a1a_ddd}
\end{pmatrix}
-t_it_k^{-1}\cdot\nabla
\begin{pmatrix}
\includegraphics[width=1.2cm]{fig_3icons/1b2b1b_ddd}
\end{pmatrix}
\\
&+\nabla
\begin{pmatrix}
\includegraphics[width=1.2cm]{fig_3icons/1b2a1b_ddd}
\end{pmatrix}
-\nabla
\begin{pmatrix}
\includegraphics[width=1.2cm]{fig_3icons/1a2b1a_ddd}
\end{pmatrix}
 =0.
\end{aligned}
\tag*{\rm(III$_\text{8}$)}
\end{gather*}
}
\end{cor}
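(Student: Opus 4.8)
The plan is to deduce (III$_4$) and (III$_8$) from relation~(III) of Theorem~\ref{thm:relation(III_6)} by purely formal algebra, relying on three elementary inputs: the colour symbols in (III) may be chosen freely, two of them in particular being allowed to coincide; $\nabla$ is an isotopy invariant and is unchanged by reversing all orientations, so that a $180^{\circ}$ rotation of a diagram --- which interchanges the two crossing sites, keeps crossing signs and colour labels, and reverses the braid word --- leaves every $\nabla$-value fixed (in particular $\nabla(\sigma_1\sigma_2\sigma_1)=\nabla(\sigma_2\sigma_1\sigma_2)$ by the braid relation); and the quadratic skein relation~(II), which expresses $\nabla$ of a tangle containing a triple crossing $\sigma_\ell^{\pm3}$ as a $\mathbb Z[\mathbf T^{\pm1}]$-combination of the tangles with $\sigma_\ell^{\pm1}$ in that place. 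Since all the $\nabla$-values involved are rational fractions in the colours, nonzero Laurent polynomials such as $t_i^{-1}t_j^{-1}-t_it_j$ or $t_it_j+t_i^{-1}t_j^{-1}$ may be cancelled at will.

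To obtain (III$_4$) I would first put $t_k=t_i$ in (III): the third bracket then vanishes, since its coefficient becomes $t_it_i^{-1}-t_i^{-1}t_i=0$, while the first two brackets acquire the opposite coefficients $\pm(t_i^{-1}t_j^{-1}-t_it_j)$; cancelling this nonzero factor leaves a coefficient-free relation among four $\sigma_1$-$\sigma_2$-$\sigma_1$ tangles with the two outer strands coloured alike. Applying the $180^{\circ}$ rotation to (III) produces a parallel relation among $\sigma_2$-$\sigma_1$-$\sigma_2$ tangles with the same coefficients, and the difference of the two --- in which the all-positive and all-negative terms cancel by the braid relation --- mixes the two families; specializing $t_k=t_i$ there too, and then combining with the previous relation and with (III) read with $t_i$ and $t_k$ interchanged, eliminates all but four terms and yields exactly (III$_4$). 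The colour coincidence disappears because it entered only through coefficients that have been cancelled.

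For (III$_8$) the point is that the patterns $\sigma_1\sigma_2^{-1}\sigma_1$ and $\sigma_1^{-1}\sigma_2\sigma_1^{-1}$ occurring there are not among the six patterns of (III), so they must be manufactured. I would apply (III) to the given $3$-string disk pre- or post-composed with $\sigma_1^{\pm2}$ --- still an instance of (III), with the coefficients unchanged; of the six resulting tangles three collapse back to basic patterns by a Reidemeister~II move, while the other three acquire a factor $\sigma_1^{\pm3}$, which relation~(II) rewrites in terms of basic patterns $\sigma_1^a\sigma_2^b\sigma_1^{\pm1}$, producing in particular the two missing ones. Carrying this out for both $\sigma_1^{+2}$ and $\sigma_1^{-2}$ gives, together with (III) itself, a small linear system over $\mathbb Z[\mathbf T^{\pm1}]$ in the eight quantities $\nabla(\sigma_1^a\sigma_2^b\sigma_1^c)$; eliminating everything outside those eight and clearing denominators reproduces (III$_8$). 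The only real difficulty throughout is the bookkeeping: after pre- or post-composition one must keep track of which colour pair sits at each crossing so that (II) is applied with the correct coefficient $t_\ell t_m+t_\ell^{-1}t_m^{-1}$, and one must check that all the auxiliary triple-crossing terms and the unwanted basic patterns cancel in the end. Finding the right linear combination is a finite, if tedious, matching of coefficients, and nothing beyond Reidemeister moves, the braid relation, and (II) is needed.
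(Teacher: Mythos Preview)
Your treatment of (III$_4$) has a genuine gap. Setting $t_k=t_i$ in (III) produces an identity only for colored links in which the first and third strands carry the \emph{same} colour; the conclusion you extract is then a relation among $\nabla$-values of such specially coloured links. The relation (III$_4$) is asserted for \emph{arbitrary} colours $t_i,t_j,t_k$, and your claim that ``the colour coincidence disappears because it entered only through coefficients that have been cancelled'' does not establish this. The coefficients in (III$_4$) are indeed colour-free, but the $\nabla$-values themselves are functions of the colours, so a relation that holds on the diagonal $t_k=t_i$ need not hold off it. (Compare: from $f(t,t)=0$ one cannot conclude $f(t_i,t_k)=0$.) The same objection applies each time you ``specialize $t_k=t_i$ there too.'' Moreover, your appeal to invariance of $\nabla$ under a $180^\circ$ rotation amounts to invariance under global orientation reversal, a property of CPF which you have not derived from (II), (III), or the other axioms and which the paper does not invoke.

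The paper avoids all of this by working entirely in the twisted group-algebra $\mathbb P_nB_n$ (Definition~\ref{defn:twisted_group-algebra}) and exhibiting the explicit identity
\[
(\text{III}_{4\text{B}})=\frac{\sigma_1\cdot(\text{III}_\text{B})\cdot\sigma_2^{-1}-\sigma_1^{-1}\cdot(\text{III}_\text{B})\cdot\sigma_2}{t_1t_2-t_1^{-1}t_2^{-1}},
\]
see Example~\ref{exam:relation(III_4)&(III_8)}. Here the braid multiplications on the left permute the variables in the coefficients (via the twist), and one checks by direct expansion that the quotient is exactly the four-term relator; since it lies in the relator ideal, Proposition~\ref{prop:relator_ideal} gives (III$_4$) for arbitrary colours. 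No specialization and no symmetry of $\nabla$ is used.

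Your plan for (III$_8$) is on firmer ground and is in fact close to the paper's: the identity there is obtained from $(\sigma_1^2-t_1t_2\cdot e)\cdot(\text{III}_\text{B})$ together with three right-multiples of (II$_\text{B}$) by specific braids, divided by $t_1t_2-t_1^{-1}t_2^{-1}$. Your observation that $\sigma_1^{\pm2}$ has trivial underlying permutation, so that left-composition does not disturb the coefficients, is correct and is exactly why this works. What is missing from your sketch is the explicit combination; ``a small linear system'' and ``matching of coefficients'' is not a proof until the combination is written down and the cancellations verified.
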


\begin{proof}
See Example~\ref{exam:relation(III_4)&(III_8)}
as an application of the skein relator ideal.
\end{proof}

\begin{rem}
The relations (III$_4$) and (III$_8$), jointly, imply (III),
as shown in Example~\ref{exam:relation(III_4)&(III_8)}.
We do not know whether Conway's four term relation (III$_4$), or the eight term relation (III$_8$), alone,
is powerful enough to replace Axiom (III) in the Main Theorem.
\end{rem}

\section {Proof of Theorem~\ref{thm:relation(III_6)}}
\label{sec:proof_skein}

In this section we assume the reader is familiar with the
terminology and notation in Sections 2 and 3 of \cite{H1}.
Since we use indices $i,j,k$ for colors,
we shall use indices $a,b,c$ for crossing points/generating arcs in a link projection,
as well as for rows/columns in a matrix.

Let $K_r$, $r=1,\dots,6$, denote the six colored links appearing in the relation (III), in that order.
Number the crossing points $P_1,\dots,P_m$,
then number the generating arcs so that $u_a$ is the generating arc exiting from $P_a$.
Outside of the depicted region, the crossing points and generating arcs
should be numbered and colored in the same way for all $K_r$.

Colors are marked by symbols $\{t_1,\dots,t_n\}$.
A coloring of a link is a map $\theta:\{u_1,\dots,u_m\}\to\{t_1,\dots,t_n\}$
which takes $u_a$ to $t_i$ if the generating arc $u_a$ has the $i$-th color.
We use the same notation for the induced linear map
$\theta: \mathbb Z[u_1^{\pm1},\dots,u_m^{\pm1}] \to \mathbb Z[t_1^{\pm1},\dots,t_n^{\pm1}]$.

Focus on any one link $K_r$.
Around each crossing point $P_a$ consider a small anticlockwise circle,
starting at a point to the right of both over- and under-crossing arcs at $P_a$.
This gives a Wirtinger relator $R_a$, a word in the free group with basis $\{u_1,\dots,u_m\}$.
Let $M_r$ be the $m\times m$ Jacobian matrix (in the sense of free differential calculus)
$M_r:=(M_{ab})$ with entries $M_{ab}:=(\partial R_a/\partial u_b)^\theta$.
This $M_r$ is a matrix in $\mathbb Z[t_1^{\pm1},\dots,t_n^{\pm1}]$.

Let $M_r^{(ab)}$ denote the matrix obtained from $M_r$
by deleting the $a$-th row and the $b$-th column.
Then the rational fraction
\[
D_r(t_1,\dots,t_n):=(-1)^{a+b}\det(M_r^{(ab)})/\mathfrak w_{a}^\theta(u_{b}^\theta-1)
\tag* {\cite{H1}(2.3)},
\]
which lies in the quotient field $\mathbb Q(t_1,\dots,t_n)$ of $\mathbb Z[t_1^{\pm1},\dots,t_n^{\pm1}]$,
is independent of the choice of $a,b$.

For each $i$, let $\kappa_i$ denote the total ``curvature'' of components
of $K_r$ with $i$-th color.
Let $\nu'_i$ and $\nu''_{i,r}$ be the number of crossing points,
\emph{outside and inside of the depicted region, respectively, }
at which the overcrossing arc has $i$-th color in the link $K_r$.
Let $\mu_{i,r}:=\kappa_i-\nu'_i-\nu''_{i,r}$.
Let $\phi:\mathbb Q(t_1,\dots,t_n)\to\mathbb Q(t_1,\dots,t_n)$ be
the substitution map sending each $t_i$ to $t_i^2$.
Then the Conway potential function of $K_r$ is
\[
\nabla_r(t_1,\dots,t_n) = D_r(t_1,\dots,t_n)^\phi
\cdot t_1^{\mu_{1,r}} \cdots t_n^{\mu_{n,r}} .
\tag* {\cite{H1}(2.4)}
\]

In Figure 1 we give pictures in the style of \cite{H1},
where $t_i,t_j,t_k$ (repetition allowed) are color labels of the strings $u_1,u_2,u_3$, respectively.
In each picture we add an anticlockwise curl as shown,
increasing the number of crossing points to $m+1$.
Clearly, adding such a curl does not change $\mu_{i,r}$ and $\det(M_r^{(ab)})$.
We can also assume $a,b>6$ (by adding more curls elsewhere if necessary).

\begin{figure}
\includegraphics[width=2.5cm]{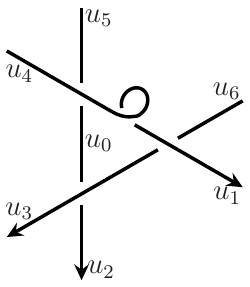}
\qquad
\includegraphics[width=2.5cm]{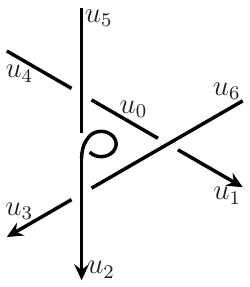}
\qquad
\includegraphics[width=2.5cm]{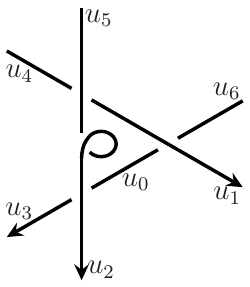}
\qquad
\includegraphics[width=2.5cm]{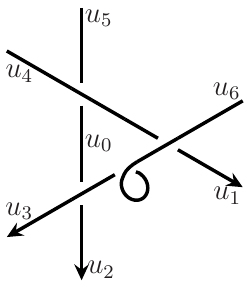}
\\
$K_1$ \hspace{2.67cm} $K_2$ \hspace{2.67cm} $K_3$ \hspace{2.67cm} $K_4$
\\[3ex]
\includegraphics[width=2.5cm]{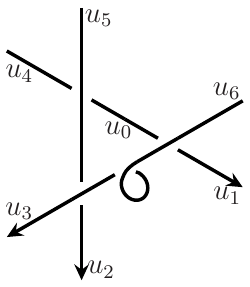}
\qquad
\includegraphics[width=2.5cm]{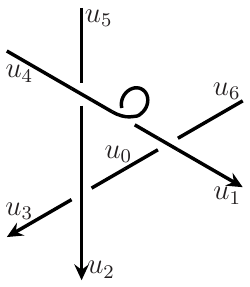}
\\
$K_5$ \hspace{2.67cm} $K_6$
\\[2ex]
{\sc Figure 1}
\end{figure}

In the following computation, the column vectors $\mathbf c_1, \dots, \mathbf c_6$
and the $(m-4)\times(m-7)$ matrix $\mathbf *$ are common to all $r$.
By Laplace expansion along the first four rows,
$\det(M_r^{(ab)})$ is a linear combination of $(m-4)\times(m-4)$ minors
$S_{\alpha\beta\gamma}:=
\|
\begin{array}{ccc|c}
\mathbf c_\alpha
& \mathbf c_\beta
& \mathbf c_\gamma
& \mathbf *
\end{array}
\|$
for $1\leq\alpha<\beta<\gamma\leq6$.
The coefficients are $4\times4$ minors, readily read off from the matrix $M_r^{(ab)}$.
See Table~\ref{table:braidform}.
We will use the shorthand $\delta_r:=\det(M_r^{(ab)})$.

For $K_1$, we have Wirtinger relators and Jacobian
{\allowdisplaybreaks
\begin{gather*}
R_0=u_0u_4u_5^{-1}u_4^{-1}, \quad
R_1=u_1u_4^{-1}, \quad
R_2=u_3u_2u_3^{-1}u_0^{-1}, \quad
R_3=u_3u_1u_6^{-1}u_1^{-1},
\\
\det(M_1^{(ab)})
=
\left\|
\begin{array}{ccccccc|c}
1 & 0 & 0 & 0 & t_j-1 & -t_i & 0 & \mathbf 0
\\
0 & 1 & 0 & 0 & -1 & 0 & 0 & \mathbf 0
\\
-1 & 0 & t_k & 1-t_j & 0 & 0 & 0 & \mathbf 0
\\
0 & t_k-1 & 0 & 1 & 0 & 0 & -t_i & \mathbf 0
\\
\hline
\mathbf 0 & \mathbf c_1 & \mathbf c_2 & \mathbf c_3 & \mathbf c_4 & \mathbf c_5 & \mathbf c_6 & \mathbf *
\end{array}
\right\| .
\end{gather*}
} 

For $K_2$, we have Wirtinger relators
{\allowdisplaybreaks
\begin{gather*}
R_0=u_5u_0u_5^{-1}u_4^{-1}, \quad
R_1=u_6u_1u_6^{-1}u_0^{-1}, \quad
R_2=u_2u_5^{-1}, \quad
R_3=u_3u_2u_6^{-1}u_2^{-1},
\\
\det(M_2^{(ab)})
=
\left\|
\begin{array}{ccccccc|c}
t_j & 0 & 0 & 0 & -1 & 1-t_i & 0 & \mathbf 0
\\
-1 & t_k & 0 & 0 & 0 & 0 & 1-t_i & \mathbf 0
\\
0 & 0 & 1 & 0 & 0 & -1 & 0 & \mathbf 0
\\
0 & 0 & t_k-1 & 1 & 0 & 0 & -t_j & \mathbf 0
\\
\hline
\mathbf 0 & \mathbf c_1 & \mathbf c_2 & \mathbf c_3 & \mathbf c_4 & \mathbf c_5 & \mathbf c_6 & \mathbf *
\end{array}
\right\| .
\end{gather*}
} 

{\allowdisplaybreaks
For $K_3$, we have Wirtinger relators
\begin{gather*}
R_0=u_0u_1u_6^{-1}u_1^{-1}, \quad
R_1=u_5u_1u_5^{-1}u_4^{-1}, \quad
R_2=u_2u_5^{-1}, \quad
R_3=u_3u_2u_0^{-1}u_2^{-1},
\\
\det(M_3^{(ab)})
=
\left\|
\begin{array}{ccccccc|c}
1 & t_k-1 & 0 & 0 & 0 & 0 & -t_i & \mathbf 0
\\
0 & t_j & 0 & 0 & -1 & 1-t_i & 0 & \mathbf 0
\\
0 & 0 & 1 & 0 & 0 & -1 & 0 & \mathbf 0
\\
-t_j & 0 & t_k-1 & 1 & 0 & 0 & 0 & \mathbf 0
\\
\hline
\mathbf 0 & \mathbf c_1 & \mathbf c_2 & \mathbf c_3 & \mathbf c_4 & \mathbf c_5 & \mathbf c_6 & \mathbf *
\end{array}
\right\| .
\end{gather*}
} 

For $K_4$, we have Wirtinger relators
{\allowdisplaybreaks
\begin{gather*}
R_0=u_0u_4u_5^{-1}u_4^{-1}, \quad
R_1=u_6u_1u_6^{-1}u_4^{-1}, \quad
R_2=u_3u_2u_3^{-1}u_0^{-1}, \quad
R_3=u_3u_6^{-1},
\\
\det(M_4^{(ab)})
=
\left\|
\begin{array}{ccccccc|c}
1 & 0 & 0 & 0 & t_j-1 & -t_i & 0 & \mathbf 0
\\
0 & t_k & 0 & 0 & -1 & 0 & 1-t_i & \mathbf 0
\\
-1 & 0 & t_k & 1-t_j & 0 & 0 & 0 & \mathbf 0
\\
0 & 0 & 0 & 1 & 0 & 0 & -1 & \mathbf 0
\\
\hline
\mathbf 0 & \mathbf c_1 & \mathbf c_2 & \mathbf c_3 & \mathbf c_4 & \mathbf c_5 & \mathbf c_6 & \mathbf *
\end{array}
\right\| .
\end{gather*}
} 

For $K_5$, we have Wirtinger relators
{\allowdisplaybreaks
\begin{gather*}
R_0=u_5u_0u_5^{-1}u_4^{-1}, \quad
R_1=u_6u_1u_6^{-1}u_0^{-1}, \quad
R_2=u_3u_2u_3^{-1}u_5^{-1}, \quad
R_3=u_3u_6^{-1},
\\
\det(M_5^{(ab)})
=
\left\|
\begin{array}{ccccccc|c}
t_j & 0 & 0 & 0 & -1 & 1-t_i & 0 & \mathbf 0
\\
-1 & t_k & 0 & 0 & 0 & 0 & 1-t_i & \mathbf 0
\\
0 & 0 & t_k & 1-t_j & 0 & -1 & 0 & \mathbf 0
\\
0 & 0 & 0 & 1 & 0 & 0 & -1 & \mathbf 0
\\
\hline
\mathbf 0 & \mathbf c_1 & \mathbf c_2 & \mathbf c_3 & \mathbf c_4 & \mathbf c_5 & \mathbf c_6 & \mathbf *
\end{array}
\right\| .
\end{gather*}
} 

For $K_6$, we have Wirtinger relators and Jacobian
{\allowdisplaybreaks
\begin{gather*}
R_0=u_0u_1u_6^{-1}u_1^{-1}, \quad
R_1=u_1u_4^{-1}, \quad
R_2=u_2u_4u_5^{-1}u_4^{-1}, \quad
R_3=u_3u_2u_0^{-1}u_2^{-1},
\\
\det(M_6^{(ab)})
=
\left\|
\begin{array}{ccccccc|c}
1 & t_k-1 & 0 & 0 & 0 & 0 & -t_i & \mathbf 0
\\
0 & 1 & 0 & 0 & -1 & 0 & 0 & \mathbf 0
\\
0 & 0 & 1 & 0 & t_j-1 & -t_i & 0 & \mathbf 0
\\
-t_j & 0 & t_k-1 & 1 & 0 & 0 & 0 & \mathbf 0
\\
\hline
\mathbf 0 & \mathbf c_1 & \mathbf c_2 & \mathbf c_3 & \mathbf c_4 & \mathbf c_5 & \mathbf c_6 & \mathbf *
\end{array}
\right\| .
\end{gather*}
} 

\begin{table}[tbp]
\renewcommand{\arraystretch}{1.2} 
\centering
\begin{tabular}{||c|| c | c | c | c | c | c ||}
\hline\hline
 & $\delta_1$ & $\delta_2$ & $\delta_3$ & $\delta_4$ & $\delta_5$ & $\delta_6$
\\ \hline\hline
$S_{123}$
& $t_i^2$ & $t_j$ & $t_it_j$ & $t_i$ & $1$ & $t_i^2t_j$
\\ \hline
$S_{124}$
& $0$ & $t_j(1-t_i)$ & $0$
& $t_i(1-t_i)$ & $1-t_i$ & $0$
\\ \hline
$S_{125}$
& $t_i(1-t_j)$ & $0$ & $0$
& $t_i(1-t_j)$ & $1-t_j$ & $0$
\\ \hline
$S_{126}$
& $t_i$ & $1$ & $1$ & $t_i$ & $1$ & $t_i$
\\ \hline
$S_{134}$
& $0$ & $t_jt_k(t_i-1)$
& $t_it_j(t_i-1)$ & $0$
& $t_k(t_i-1)$ & $0$
\\ \hline
$S_{135}$
& $-t_it_k$ & $-t_j$ & $-t_it_j$ & $-t_k$ & $-t_k$ & $-t_it_j$
\\ \hline
$S_{136}$
& $0$ & $1-t_k$ & $1-t_k$ & $0$ & $0$
& $t_i(1-t_k)$
\\ \hline
$S_{145}$
& $0$ & $t_j(t_i-1)$ & $0$
& $t_k(t_i-1)$
& $t_jt_k(t_i-1)$ & $0$
\\ \hline
$S_{146}$
& $0$ & $1-t_i$ & $1-t_i$ & $0$
& $t_k(1-t_i)$ & $0$
\\ \hline
$S_{156}$
& $t_k$ & $1$ & $1$ & $t_k$ & $t_k$ & $1$
\\ \hline
$S_{234}$
& $t_i^2$ & $t_j^2t_k$ & $t_it_j^2$ & $t_it_k$ & $t_jt_k$ & $t_i^2t_j$
\\ \hline
$S_{235}$
& $t_i(t_j-1)$ & $0$ & $0$
& $t_k(t_j-1)$ & $0$
& $t_it_j(t_j-1)$
\\ \hline
$S_{236}$
& $t_i(t_k-1)$ & $0$
& $t_j(t_k-1)$ & $0$ & $0$
& $t_it_j(t_k-1)$
\\ \hline
$S_{245}$
& $t_i(t_j-1)$ & $0$ & $0$
& $t_k(t_j-1)$
& $t_jt_k(t_j-1)$ & $0$
\\ \hline
$S_{246}$
& $-t_i$ & $-t_jt_k$ & $-t_j$ & $-t_it_k$ & $-t_jt_k$ & $-t_i$
\\ \hline
$S_{256}$
& $t_k(1-t_j)$ & $0$ & $0$
& $t_k(1-t_j)$ & $0$ & $1-t_j$
\\ \hline
$S_{345}$
& $t_it_k$ & $t_j^2t_k$ & $t_it_j^2$ & $t_k^2$ & $t_jt_k^2$ & $t_it_j$
\\ \hline
$S_{346}$
& $0$ & $t_jt_k(t_k-1)$
& $t_it_j(t_k-1)$
& $0$ & $0$ & $t_i(t_k-1)$
\\ \hline
$S_{356}$
& $t_k(1-t_k)$ & $0$
& $t_j(1-t_k)$ & $0$ & $0$ & $1-t_k$
\\ \hline
$S_{456}$
& $t_k$ & $t_jt_k$ & $t_j$ & $t_k^2$ & $t_jt_k^2$ & $1$
\\ \hline\hline
\end{tabular}
\caption{\protect Coefficient of $S_{\alpha\beta\gamma}$ in $\delta_r:=\det(M_r^{(ab)})$}
\label{table:braidform}
\end{table}

Let
\[
\varepsilon:=t_1^{\kappa_1-\nu'_1} \cdots t_n^{\kappa_n-\nu'_{n}} \cdot
[(-1)^{a+b}/\mathfrak w_{a}^{\theta}(u_{b}^{\theta}-1)]^\phi.
\]
Then
$\nabla_r = \varepsilon \cdot t_1^{-\nu''_{1,r}}\cdots t_n^{-\nu''_{n,r}} \delta_r^\phi$. Namely,
\begin{alignat*}{2}
\nabla_1 &= \varepsilon \cdot t_i^{-2}t_k^{-1} \delta_1^\phi, &\qquad
\nabla_2 &= \varepsilon \cdot t_j^{-2}t_k^{-1} \delta_2^\phi,
\\
\nabla_3 &= \varepsilon \cdot t_i^{-1}t_j^{-2} \delta_3^\phi, &\qquad
\nabla_4 &= \varepsilon \cdot t_i^{-1}t_k^{-2} \delta_4^\phi,
\\
\nabla_5 &= \varepsilon \cdot t_j^{-1}t_k^{-2} \delta_5^\phi, &\qquad
\nabla_6 &= \varepsilon \cdot t_i^{-2}t_j^{-1} \delta_6^\phi.
\end{alignat*}

Denote the left hand side of the relation (III) by $F$. Then
\begin{align*}
t_it_jt_k\cdot F
&=(t_i^2t_j^2t_k-t_k)(\nabla_1+\nabla_2)
+(t_i-t_it_j^2t_k^2)(\nabla_3+\nabla_4)
\\
&\quad +(t_jt_k^2-t_i^2t_j)(\nabla_5+\nabla_6)
\\
&=\varepsilon\cdot \left[
(t_j-t_i^{-1})\delta_1-(t_j^{-1}-t_i)\delta_2+(t_j^{-1}-t_k)\delta_3-(t_j-t_k^{-1})\delta_4 \right.
\\
&\qquad\quad \left. +(1-t_it_k^{-1})\delta_5-(1-t_i^{-1}t_k)\delta_6 \right]^\phi.
\end{align*}
From Table~\ref{table:braidform}, it is straightforward to check
that the expression enclosed in brackets equals to $0$.
Hence $F=0$.
\qed

\section{Colored braids and multi-variable skein relators}
\label{sec:braids}

We first set up a language of colored braids which is more flexible than the one in \cite[Definition 3.2]{M2}.
Then, more importantly, we clarify the meaning of skein relations.
The symbols $t_i$ and $t_j$ in a skein relation, e.g.\ (III) in Theorem~\ref{thm:relation(III_6)},
carry the location information of strings, so a priori to be distinguished.
On the other hand, $t_i$ and $t_j$ are also interpreted as color symbols,
so may be indistinguishable if the relevant strings have the same color.
This dual role may cause confusion when we operate on colored braids.
To resolve this problem we shall make a distinction between $\mathbb P_n$ and $\mathbb P_{\mathbf T}$,
and prefer the notion of skein relators to the commonly used language of skein relations.

\subsection{Links as closed braids}
For braids, we use the following conventions:
Braids are drawn from top to bottom.
The strands of a braid are numbered at the top of the braid, from left to right.
The product $\beta_1\cdot\beta_2$ of two $n$-braids is obtained by drawing $\beta_2$ below $\beta_1$.
The set $B_n$ of all $n$-braids forms a group under this multiplication,
with standard generators $\sigma_1,\sigma_2,\dots,\sigma_{n-1}$.
Each $n$-braid $\beta$ has an \emph{underlying permutation} of $\{1,\dots,n\}$, denoted $i\mapsto i^{\beta}$,
where $i^{\beta}$ is the position of the $i$-th strand at the bottom of $\beta$.
In this way the braid group $B_n$ projects onto the symmetric group $\mathfrak S_n$.

It is well known that links can be presented as closed braids.
The closure of a braid $\beta\in B_n$ will be denoted $\wh\beta$.
Two braids (possibly with different number of strands) have isotopic closures
if and only if they can be related by a finite sequence of two types of moves:
\begin{enumerate}
\item Conjugacy move: $\beta$ $\leftrightsquigarrow$ $\beta'$ where $\beta,\beta'$ are conjugate in a braid group $B_n$;
\item Markov move: $\beta\in B_n$ $\leftrightsquigarrow$ $\beta\sigma_n^{\pm1}\in B_{n+1}$.
\end{enumerate}

\subsection{Colored braids}
\label{subsec:colored_braid}

Let $\mathbf T$ be the set of color symbols.
A \emph{colored $n$-braid\/} $\beta^{(t_1,\dots,t_n)}$ is defined to be an $n$-braid $\beta$
equipped with a sequence $(t_1,\dots,t_n)$ of color symbols (repetition allowed),
where $t_i\in\mathbf T$ is the color label on the $i$-th strand
\emph{at the top} of the braid.
By convention, when we refer to a colored $n$-braid $\beta$ without specifying its color sequence,
it is understood that the color label on the $i$-th strand (at the top) is denoted $t_i$.

Our notion of colored braids is more general than Murakami's (see \cite[Definition 3.2]{M2})
which is the same as the notion of closable colored braid below.
We do \emph{not\/} insist that the color sequences at top and bottom should match.
Colored $n$-braids do not form a group.

\subsection{Colored links as closures of colored braids}

When a colored link $L$ is presented as the closure of an $n$-braid $\beta$,
the coloring of $L$ puts a sequence $(t_1,\dots,t_n)$ of color labels at the top of $\beta$,
and gives us a colored braid $\beta^{(t_1,\dots,t_n)}$,
or $\beta$ for brevity by our convention in Subsection~\ref{subsec:colored_braid}.
In such a case the color sequences at the top and bottom of $\beta$ must match,
that is, $t_i=t_{i^\beta}$ for all $i$.
A colored braid with this property will be called \emph{closable}.
Note that there may be repetitions in the color sequence $(t_1,\dots,t_n)$.
The colored link $L$ is then represented as the closure of the closable colored braid $\beta$,
written $L=\wh\beta$.

\begin{prop}
\label{prop:colored_Markov}
\textup{(\cite[Theorems 3.3 and 3.5]{M2})}
Every colored link can be presented as the closure of a closable colored braid.
Two closable colored braids (possibly with different number of strands) have isotopic closures
if and only if they can be related by a finite sequence of two types of moves:
\begin{enumerate}
\item Conjugacy move: $\beta^{(t_1,\dots,t_n)}$ $\leftrightsquigarrow$ $\beta'{}^{(t'_1,\dots,t'_n)}$,
where the $n$-braids $\beta'=\alpha\beta\alpha^{-1}$ for some $n$-braid $\alpha$,
and the color sequences $t'_i=t_{i^\alpha}$ for all $i$;
\item Markov move: the colored $n$-braid $\beta^{(t_1,\dots,t_n)}$ $\leftrightsquigarrow$
the colored $(n+1)$-braid $(\beta\sigma_n^{\pm1})^{(t_1,\dots,t_n,t_n)}$.
\qed
\end{enumerate}
\end{prop}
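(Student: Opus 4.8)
This is the colored refinement of Alexander's braiding theorem and of Markov's theorem, and the plan is to deduce both halves from the uncolored statements by carrying the colors along. For the first assertion, forget the colors and, by Alexander's theorem, write the underlying link as $\wh\beta$ for some $\beta\in B_n$. The components of $\wh\beta$ correspond to the cycles of the underlying permutation $i\mapsto i^\beta$, hence to the components of $L$; coloring the $i$-th strand at the top of $\beta$ by the color of the component it lies on makes the color constant along each such cycle, so $t_i=t_{i^\beta}$ for all $i$, i.e.\ $\beta$ is closable, and by construction $\wh\beta=L$ as a colored link.

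The ``if'' direction of the second assertion amounts to checking that each colored move leaves the colored closure isotopic. A conjugacy move $\beta\leftrightsquigarrow\alpha\beta\alpha^{-1}$ is realized by sliding the factor $\alpha^{\pm1}$ once around the closure arc, an ambient isotopy that carries the strand in position $i$ onto the strand that was in position $i^\alpha$ --- which is exactly why the color sequence must be relabeled by $t'_i=t_{i^\alpha}$. A Markov move $\beta\leftrightsquigarrow\beta\sigma_n^{\pm1}$ absorbs a small Reidemeister~I kink, and the new strand lies on the same link component as the $n$-th strand, forcing the color $t_n$ on it. Both moves respect orientations, so the colored closures agree.

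For the ``only if'' direction, suppose $\wh\beta_1$ and $\wh\beta_2$ are isotopic as colored links and fix a color- and orientation-preserving ambient isotopy $\Phi$ between them. Forgetting colors and invoking Markov's theorem --- in the form that a prescribed isotopy of the closures is realized by a chain of conjugacy and stabilization moves --- we get $\beta_1=\gamma_0\leftrightsquigarrow\dots\leftrightsquigarrow\gamma_N=\beta_2$ realizing $\Phi$. I then propagate the coloring of $\beta_1$ forward along this chain: across a conjugacy by $\alpha$ put $t_i^{(s+1)}=t_{i^\alpha}^{(s)}$, and across a stabilization $\gamma_s\in B_n\leftrightsquigarrow\gamma_s\sigma_n^{\pm1}\in B_{n+1}$ keep $t_1^{(s)},\dots,t_n^{(s)}$ and append $t_n^{(s)}$. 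A short computation with underlying permutations, using closability of $\gamma_s$, shows that each $\gamma_{s+1}$ with its new coloring is again closable, so every step is a genuine colored move; and because the chain realizes the color-preserving isotopy $\Phi$, the coloring reaching $\gamma_N=\beta_2$ is precisely the one prescribed by $\wh\beta_2=L$. This gives the required chain of colored moves.

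The main obstacle is the input used in the last step: one needs the uncolored Markov theorem in the parametrized form that a given ambient isotopy between closed braids decomposes into conjugacy and stabilization moves --- equivalently, that the isotopy may be put in general position with respect to the braid axis, the non-generic instants being exactly these moves. With that available the color bookkeeping is routine; without it one must separately argue that any permutation of the components of $\wh\beta_2$ induced by an ambient self-isotopy can already be realized by colored conjugacy moves, which is the only delicate residual point.
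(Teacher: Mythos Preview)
The paper does not supply a proof of this proposition at all: it is stated with a citation to Murakami \cite[Theorems~3.3 and~3.5]{M2} and closed with a \qed. So there is nothing in the paper to compare your argument against; your write-up is an independent attempt to justify a result the author simply imports.

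Your sketch is the natural one, and the first assertion and the ``if'' direction are fine. You have also put your finger on the genuine issue in the ``only if'' direction. The \emph{statement} of the classical Markov theorem only guarantees some chain of moves between $\beta_1$ and $\beta_2$; propagating the coloring of $\beta_1$ along that chain lands you on \emph{a} closable coloring of $\beta_2$, and you must still argue it is the given one. Your two proposed fixes are both viable. The cleaner route is the first: the standard proofs of Markov's theorem (Birman, Morton, Traczyk) proceed by putting a \emph{given} isotopy in general position with respect to the braid axis, so they in fact yield the parametrized version you need --- a chain of moves realizing the prescribed color-preserving isotopy $\Phi$. With that input the propagated coloring coincides with the target coloring on the nose, and your bookkeeping goes through. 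Your alternative --- reducing to the claim that a component-permuting self-isotopy of $\wh\beta_2$ can be realized by colored conjugacies --- is not obviously easier and would itself want the parametrized statement. So your diagnosis is correct; the fix is to cite the stronger form of Markov rather than the bare existence statement.
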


Beware of the color sequence specifications in these moves of colored braids.

\subsection{Skein relators}
\label{subsec:skein_relators}

\begin{defn}
\label{defn:persistent_fractions}
Let $\mathbb C[t_1,\dots,t_n]$ denote the polynomial ring in $n$ variables,
and let $\mathbb C(t_1,\dots,t_n)$ be its field of fractions, i.e.,
the field of rational fractions in $n$ variables.
Here the variables $t_1,\dots,t_n$ are independent of each other.
The reducing homomorphism $\rho:\mathbb C[t_1,\dots,t_n]\to\mathbb C[t]$
sends all $t_i$'s into a single variable $t$.
A rational fraction in $\mathbb C(t_1,\dots,t_n)$ will be called \emph{persistent} if,
when it is written in simplified form $f/g$ where $f$ and $g$ are polynomials without common factors,
the denominator $g$ does not vanish under $\rho$.
Let $\mathbb P_n$ denote the subring of $\mathbb C(t_1,\dots,t_n)$ consisting of all persistent fractions.
The above homomorphism $\rho$ extends to a ring homomorphism $\rho:\mathbb P_n\to\mathbb C(t)$
(but is not extendable to $\mathbb C(t_1,\dots,t_n)$).
An element of $\mathbb P_n$ is invertible in $\mathbb P_n$ if and only if it is not killed by $\rho$.
Clearly the Laurent polynomial ring $\mathbb Z[t_1^{\pm1},\dots,t_n^{\pm1}]$ is contained in $\mathbb P_n$.
The integral domain $\mathbb P_n$ is where the coefficients in skein relations belong.

Similarly, for a set $\mathbf T$ of color symbols,
let $\mathbb C[\mathbf T]$ denote the polynomial ring with variables in $\mathbf T$,
and let $\mathbb C(\mathbf T)$ be the field of rational fractions in these variables.
The reducing homomorphism $\rho:\mathbb C[\mathbf T]\to\mathbb C[t]$
sends all of $\mathbf T$ into a single variable $t$.
As in the last paragraph, $\rho$ extends to a homomorphism $\rho:\mathbb P_{\mathbf T}\to\mathbb C(t)$
from the ring of persistent rational fractions with variables in $\mathbf T$.
The integral domain $\mathbb P_{\mathbf T}$ is where the Conway potential functions belong,
and the reducing homomorphism $\rho$ sends the colored CPF to the uncolored CPF.

Let $\mathbb P_n B_n$ be the free $\mathbb P_n$-module with basis $B_n$.
An element of $\mathbb P_nB_n$ is called \emph{homogeneous} if all its terms
(with nonzero coefficients) have the same underlying permutation.
\end{defn}

\begin{defn}
\label{defn:skein_relation}
We say that a homogeneous element
\[
C_1\cdot\beta_1+\dots+C_k\cdot\beta_k 
\]
of $\mathbb P_n B_n$ is a \emph{skein relator}, or equivalently, say that the corresponding formal equation
(in which $\nabla_{L_{\beta_h}}$ stands for the $\nabla$ of the link $L_{\beta_h}$)
\[
C_1\cdot\nabla_{L_{\beta_1}}+\dots+C_k\cdot\nabla_{L_{\beta_k}}=0
\]
is a \emph{skein relation}, if the following condition is satisfied:
For any \emph{colored\/} links $L_{\beta_1},\dots,L_{\beta_k}$ that are identical except in a cylinder
where they are represented by the braids $\beta_1,\dots,\beta_k$ respectively
(consequently these braids inherit a common color sequence at the top of that cylinder),
the formal equation becomes an equality in $\mathbb P_{\mathbf T}$
when the variables in the $C_h$'s are substituted by that sequence of color symbols.
\end{defn}

\begin{exam}
\label{exam:relator_vs_relation}
Suppose a homogeneous element $C_1\cdot\beta_1+\dots+C_k\cdot\beta_k\in \mathbb P_nB_n$ is given.
If the $\beta_h$'s are colored by a common color sequence $t_1,\dots,t_n$ (at the top) which makes them closable,
then by taking closures and replacing the variables with the color sequence,
we get a corresponding rational fraction
\[
C_1\cdot \nabla_{\wh\beta_1}+\dots +C_k\cdot \nabla_{\wh\beta_k} \in \mathbb P_{\mathbf T}.
\]
The latter vanishes if the former is a skein relator.
\end{exam}

\begin{exam}
The skein relations (II) (from \cite[(5.1)]{H1}), (III) (from Theorem~\ref{thm:relation(III_6)}),
(III$_4$) and (III$_8$) (from Corollary~\ref{cor:relation(III_4)&(III_8)})
correspond to the following skein relators.
(The symbol $e$ stands for the trivial braid.)
{\allowdisplaybreaks
\begin{gather*}
\text{\rm(II$_\text{B}$)}:=
{\sigma_1^2} +\sigma_1^{-2} -(t_1t_2+t_1^{-1}t_2^{-1})\cdot{e} ;
\\
\text{\rm(III$_\text{B}$)}:=
\begin{aligned}[t]
&(t_1^{-1}t_2^{-1}-t_1t_2)\cdot
({\sigma_1^{-1}\sigma_2^{-1}\sigma_1} + {\sigma_1\sigma_2\sigma_1^{-1}})
\\
&+ (t_2t_3-t_2^{-1}t_3^{-1})\cdot
({\sigma_1\sigma_2^{-1}\sigma_1^{-1}} + {\sigma_1^{-1}\sigma_2\sigma_1})
\\
&+ (t_1t_3^{-1}-t_1^{-1}t_3)\cdot
({\sigma_1\sigma_2\sigma_1} + {\sigma_1^{-1}\sigma_2^{-1}\sigma_1^{-1}}) ;
\end{aligned}
\\
\text{\rm(III$_\text{4B}$)}:=
{\sigma_1^{-1}\sigma_2\sigma_1^{-1}} - {\sigma_1\sigma_2^{-1}\sigma_1}
+ {\sigma_2\sigma_1^{-1}\sigma_2} - {\sigma_2^{-1}\sigma_1\sigma_2^{-1}} ;
\\
\text{\rm(III$_\text{8B}$)}:=
\begin{aligned}[t]
&t_1t_2\cdot{\sigma_1^{-1}\sigma_2^{-1}\sigma_1} - t_1^{-1}t_2^{-1}\cdot{\sigma_1\sigma_2\sigma_1^{-1}}
+t_2^{-1}t_3^{-1}\cdot{\sigma_1\sigma_2^{-1}\sigma_1^{-1}} - t_2t_3\cdot{\sigma_1^{-1}\sigma_2\sigma_1}
\\
&+t_1^{-1}t_3\cdot{\sigma_1\sigma_2\sigma_1} - t_1t_3^{-1}\cdot{\sigma_1^{-1}\sigma_2^{-1}\sigma_1^{-1}}
+{\sigma_1^{-1}\sigma_2\sigma_1^{-1}} - {\sigma_1\sigma_2^{-1}\sigma_1} .
\end{aligned}
\end{gather*}
}
\end{exam}

\subsection{Properties of skein relators}
\label{subsec:properties_of_relators}

\begin{prop}
\label{prop:sym_rel}
Assume that
\[
C_1\cdot\nabla_{L_{\beta_1}}+\dots+C_k\cdot\nabla_{L_{\beta_k}}=0
\]
is a skein relation. Then for any given braid $\alpha\in B_n$,
the following equations are also skein relations:
\begin{gather*}
C_1\cdot\nabla_{L_{(\beta_1\alpha)}}+\dots+C_k\cdot\nabla_{L_{(\beta_k\alpha)}}=0;
\\
{^\alpha\!}C_1\cdot\nabla_{L_{(\alpha\beta_1)}}+\dots+{^\alpha\!}C_k\cdot\nabla_{L_{(\alpha\beta_k)}}=0,
\end{gather*}
where each ${^\alpha\!}C_h(t_1,\dots,t_n)$
is obtained from $C_h$ by permuting variables,
replacing each variable $t_i$ of $C_h$ with the variable $t_j$ such that $j^\alpha=i$.
The permutation of variables $C\mapsto{^\alpha\!}C$ defines a left $B_n$-action
on the coefficient domain $\mathbb P_n$.
\end{prop}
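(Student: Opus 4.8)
The plan is to deduce all three assertions directly from the definition of a skein relation (Definition~\ref{defn:skein_relation}), with no extra input. The idea is to view each new braid $\beta_h\alpha$ (resp.\ $\alpha\beta_h$) as the old braid $\beta_h$ sitting inside a slightly smaller cylinder, the extra factor $\alpha$ together with the original exterior diagram being the common complementary picture. Right multiplication and the $B_n$-action statement are formal; the only point needing care is how the color labels get permuted when $\alpha$ is stacked \emph{above} $\beta_h$.

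\textbf{Right multiplication.} Let $L_{(\beta_1\alpha)},\dots,L_{(\beta_k\alpha)}$ be colored links agreeing outside a cylinder $D$ in which $L_{(\beta_h\alpha)}$ is presented by $\beta_h\alpha$. Since braids run top to bottom, inside $D$ the factor $\beta_h$ fills an upper sub-cylinder $D'$ while $\alpha$ fills the lower part; the latter, glued to the exterior of $D$, is a diagram common to all $h$. Hence the $L_{(\beta_h\alpha)}$ agree outside $D'$, where they are presented by the $\beta_h$, and the color sequence at the top of $D'$ equals that at the top of $D$ and is independent of $h$. Applying the hypothesis to the sub-cylinder $D'$ gives exactly the first asserted identity, with each $C_h$ evaluated at that common color sequence. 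Homogeneity is preserved: every $\beta_h\alpha$ has underlying permutation equal to that of $\beta_1$ followed by that of $\alpha$.

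\textbf{Left multiplication.} Running the same argument for $\alpha\beta_h$, now $\alpha$ occupies the upper part of $D$ and $\beta_h$ a lower sub-cylinder $D''$, so the $L_{(\alpha\beta_h)}$ agree outside $D''$ and are presented there by the $\beta_h$. If $(t_1,\dots,t_n)$ is the color sequence at the top of $D$, then following strands through $\alpha$ shows the strand carrying $t_i$ reaches position $i^\alpha$ at the bottom of $\alpha$, so the color at position $j$ at the top of $D''$ is $t_i$ with $i^\alpha=j$. Applying the hypothesis to $D''$ with this color sequence, the coefficient of $\nabla_{L_{(\alpha\beta_h)}}$ is $C_h$ with each variable $t_j$ replaced by $t_i$ such that $i^\alpha=j$, which is precisely ${}^\alpha C_h$ as defined in the statement. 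This is the second asserted skein relation, and homogeneity is again preserved.

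\textbf{The $B_n$-action.} The assignment $C\mapsto{}^\alpha C$ is the ring automorphism $\phi_\alpha$ of $\mathbb P_n$ with $\phi_\alpha(t_i)=t_{i^{\alpha^{-1}}}$; it maps $\mathbb P_n$ into itself since $\rho\circ\phi_\alpha=\rho$, so persistence is preserved. Clearly $\phi_e=\mathrm{id}$, and the identity $(i^\beta)^\gamma=i^{\beta\gamma}$ (which makes $B_n\to\mathfrak S_n$ a homomorphism) gives $\phi_\alpha\circ\phi_{\alpha'}=\phi_{\alpha\alpha'}$; hence $\alpha\mapsto\phi_\alpha$ is a left action of $B_n$ on $\mathbb P_n$, in fact factoring through $\mathfrak S_n$. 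I expect the only real obstacle to be the color bookkeeping in the left-multiplication case: one must verify that placing $\beta_h$ below $\alpha$ feeds it the $\alpha$-permuted color sequence and that the resulting substitution matches the definition of ${}^\alpha C_h$ exactly (not, say, its inverse), which is also where the conventions for $i\mapsto i^\alpha$ and for $C\mapsto{}^\alpha C$ must be reconciled.
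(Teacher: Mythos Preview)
Your proof is correct and follows essentially the same approach as the paper's: split the cylinder into two halves, absorb the common factor $\alpha$ into the exterior diagram, and apply the original skein relation to the sub-cylinder containing the $\beta_h$; the color bookkeeping in the left-multiplication case is handled identically. You actually supply more detail than the paper does on two points---the explicit verification that $\phi_\alpha\circ\phi_{\alpha'}=\phi_{\alpha\alpha'}$ via $(i^{\alpha'^{-1}})^{\alpha^{-1}}=i^{(\alpha\alpha')^{-1}}$, and the observation that $\rho\circ\phi_\alpha=\rho$ ensures $\phi_\alpha$ preserves $\mathbb P_n$---whereas the paper simply asserts ``clearly a left action.''
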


\begin{proof}
(i) Look at the cylinder where the colored links $L_{(\beta_1\alpha)},\dots,L_{(\beta_k\alpha)}$
are represented differently by braids $\beta_1\alpha,\dots,\beta_k\alpha$, respectively.
In the upper half cylinder they are represented by braids $\beta_1,\dots,\beta_k$.
So the assumption implies the conclusion.

(ii) Look at the cylinder where the colored links $L_{(\alpha\beta_1)},\dots,L_{(\alpha\beta_k)}$
are represented differently by braids $\alpha\beta_1,\dots,\alpha\beta_k$, respectively.
In the lower half cylinder they are represented as braids $\beta_1,\dots,\beta_k$.
So the assumption implies a linear equality
$C_1'\cdot\nabla_{L_{(\alpha\beta_1)}}+\dots+C_k'\cdot\nabla_{L_{(\alpha\beta_k)}}=0$,
in which the coefficients $C_h'$ are essentially the same as $C_h$, but with
the variables $\{t_i\}$ permuted.
Note that by our convention, the variable $t_j$ in $C'_h$ refers to
the color label of the $j$-th strand at the top of the colored braid $(\alpha\beta_h)^{(t_1,\dots,t_n)}$,
i.e., the $j$-th strand at the top of the whole cylinder (for $\alpha\beta_h$).
This strand is exactly the $j^{\alpha}$-th strand at the top of the lower half cylinder (for $\beta_h$).
So the variable $t_j$ in $C'_h$ is the variable $t_{j^{\alpha}}$ in $C_h$.
Hence $C'_h={^\alpha\!}C_h$.

(iii) The action of $B_n$ on $\mathbb P_n$ is clearly a left action.
\end{proof}

\begin{defn}
\label{defn:twisted_group-algebra}
In $\mathbb P_nB_n$ define the multiplication by
\[
(C_1\beta_1)\cdot(C_2\beta_2) := (C_1\cdot{}^{\beta_1}C_2)(\beta_1\beta_2).
\]
Then $\mathbb P_nB_n$ becomes a $\mathbb P_n$-algebra.
Note that it is not the ordinary group-algebra, but is \emph{twisted\/}
by the left $B_n$-action on the coefficient domain $\mathbb P_n$.
\end{defn}

\begin{prop}
\label{prop:relator_ideal}
Properties of skein relators:

\textup{(1)} Skein relators are homogeneous elements of the algebra $\mathbb P_nB_n$.

\textup{(2)} The sum of two skein relators is not a skein relator
unless they have the same underlying permutation.

\textup{(3)} A skein relator when left- or right-multiplied by a braid is still a skein relator.

\textup{(4)} Linear combinations of skein relators form a two-sided ideal\/ $\mathfrak R_n$
(called the \emph{relator ideal\/})
in $\mathbb P_nB_n$.

\textup{(5)} Every homogeneous element of\/ $\mathfrak R_n$ is a skein relator.
\qed
\end{prop}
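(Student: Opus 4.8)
The plan is to verify the five assertions in turn, harvesting most of the content from the earlier definitions and from Proposition~\ref{prop:sym_rel}. Part (1) is essentially the definition: a skein relator is introduced in Definition~\ref{defn:skein_relation} as a \emph{homogeneous} element of $\mathbb P_nB_n$, so there is nothing to prove beyond recalling that homogeneity means all terms share one underlying permutation, and that the $\beta_h$ in Definition~\ref{defn:skein_relation}, being simultaneously insertable into one cylinder of a fixed colored link, necessarily induce the same permutation of the endpoints of that cylinder.

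For (2), I would argue that if $X=\sum_h C_h\beta_h$ and $Y=\sum_\ell D_\ell\gamma_\ell$ are skein relators with distinct underlying permutations $\pi_X\neq\pi_Y$, then $X+Y$ is not homogeneous, hence by (1) cannot be a skein relator; conversely, if $\pi_X=\pi_Y$, then for any colored link realizing all the $\beta_h$ and $\gamma_\ell$ in a common cylinder (which is possible precisely because the permutations agree, so the color sequence at the top propagates consistently), the two defining equalities in $\mathbb P_{\mathbf T}$ add, showing $X+Y$ is a skein relator. The only subtlety to flag is the phrase ``unless they have the same underlying permutation'': the statement is that the sum is in general \emph{not} a relator, with the stated exception, so both directions of this dichotomy should be mentioned.

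Part (3) is where Proposition~\ref{prop:sym_rel} does the work. Given a skein relator $X=\sum_h C_h\beta_h$ and a braid $\alpha\in B_n$, right multiplication in the twisted algebra gives $X\cdot(1\cdot\alpha)=\sum_h (C_h\cdot{}^{\beta_h}1)(\beta_h\alpha)=\sum_h C_h(\beta_h\alpha)$, which matches the first equation of Proposition~\ref{prop:sym_rel} exactly, hence is a skein relation, i.e.\ $X\alpha$ is a skein relator. Left multiplication gives $(1\cdot\alpha)\cdot X=\sum_h ({}^{\alpha}C_h)(\alpha\beta_h)$, which is precisely the second equation of Proposition~\ref{prop:sym_rel}, again a skein relation. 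So $X\alpha$ and $\alpha X$ are both skein relators; one should note these are genuinely the algebra multiplications of Definition~\ref{defn:twisted_group-algebra}, which is why the twisting by $C\mapsto{}^{\alpha}C$ appears on the left.

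Parts (4) and (5) then follow formally. A $\mathbb P_n$-linear combination of skein relators, grouped by underlying permutation, is by (2) a sum over permutations of homogeneous skein relators; but one must check that a $\mathbb P_n$-scalar multiple $c\cdot X$ of a skein relator is again one, which is immediate since multiplying the defining equality in $\mathbb P_{\mathbf T}$ by the image of $c$ preserves it. Closure under left and right multiplication by arbitrary elements of $\mathbb P_nB_n$ reduces, by $\mathbb P_n$-bilinearity, to multiplication by single basis braids, handled in (3). Hence the set $\mathfrak R_n$ of all finite $\mathbb P_n$-linear combinations of skein relators is a two-sided ideal. For (5), a \emph{homogeneous} element of $\mathfrak R_n$ is, after collecting terms, a $\mathbb P_n$-linear combination $\sum_s c_s X_s$ of skein relators; one discards the $X_s$ whose permutation differs from the common one (their contributions must cancel, but in fact the statement only needs that a homogeneous element which lies in the ideal can be rewritten as a linear combination of skein relators sharing its permutation), and then by (2) the sum of those remaining is itself a skein relator. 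The main obstacle, such as it is, lies in (5): one has to be careful that an element of $\mathfrak R_n$ is a priori only a combination of skein relators of \emph{various} permutations, and argue that a homogeneous such combination can be reorganized into relators all of the ambient permutation — this uses that the twisted-algebra grading by underlying permutation is a grading (products respect it, sums of distinct-degree pieces are direct), so the homogeneous component of a relator combination is again a relator combination.
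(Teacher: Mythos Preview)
Your proposal is correct and matches the paper's intended approach: the paper simply writes \qed after the statement, treating all five parts as immediate from Definition~\ref{defn:skein_relation}, Definition~\ref{defn:twisted_group-algebra}, and Proposition~\ref{prop:sym_rel}. Your writeup makes explicit exactly the verifications the paper leaves to the reader, and in particular your identification of (3) with the two displayed equations of Proposition~\ref{prop:sym_rel} via the twisted multiplication, together with the grading-by-permutation argument for (5), is the intended content.
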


\begin{exam}
\label{exam:relation(III_4)&(III_8)}
By computation we can verify the following three equalities in $\mathbb P_nB_n$:
\begin{gather*}
(\text{III}_\text{4B})=
\frac{\sigma_1\cdot (\text{III}_\text{B})\cdot \sigma_2^{-1} - \sigma_1^{-1}\cdot (\text{III}_\text{B})\cdot \sigma_2}{t_1t_2-t_1^{-1}t_2^{-1}} ;
\\
\begin{aligned}
(\text{III}_\text{8B})&=
\frac{(\sigma_1^2-t_1t_2\cdot e)\cdot (\text{III}_\text{B})}{t_1t_2-t_1^{-1}t_2^{-1}}
+ (\text{II}_\text{B})\cdot {\sigma_1\sigma_2\sigma_1^{-1}}
\\
&- (\text{II}_\text{B})\cdot
\left[
\frac{t_2t_3-t_2^{-1}t_3^{-1}}{t_1t_2-t_1^{-1}t_2^{-1}}\cdot {\sigma_1\sigma_2^{-1}\sigma_1^{-1}}
+ \frac{t_1t_3^{-1}-t_1^{-1}t_3}{t_1t_2-t_1^{-1}t_2^{-1}}\cdot {\sigma_1\sigma_2\sigma_1}
\right] ;
\end{aligned}
\\
(\text{III}_\text{B})
=\sigma_1^{-1}\sigma_2^{-1}\sigma_1^{-1}\cdot(\text{III}_\text{8B})\cdot\sigma_1\sigma_2\sigma_1
-(\text{III}_\text{8B})
+(\text{III}_\text{4B}) .
\end{gather*}

By the first two equalities and Proposition~\ref{prop:relator_ideal},
$(\text{III}_\text{B})\in\mathfrak R_n$ implies
both $(\text{III}_\text{4B})$ and $(\text{III}_\text{8B})$ are in $\mathfrak R_n$, hence are skein relators.
So Theorem~\ref{thm:relation(III_6)} implies Corollary~\ref{cor:relation(III_4)&(III_8)}.

Conversely, by the third equality, both $(\text{III}_\text{4B}), (\text{III}_\text{8B}) \in \mathfrak R_n$
would imply $(\text{III}_\text{B})\in\mathfrak R_n$.
So Corollary~\ref{cor:relation(III_4)&(III_8)} is powerful enough to imply Theorem~\ref{thm:relation(III_6)}.
\end{exam}

\begin{exam}
\label{exam:(III_7)}
Murakami's Axiom (3) for CPF \cite[page~126]{M2} corresponds to the relator
\begin{align*}
\text{\rm(III$_\text{7B}$)}&:=
(t_1+t_1^{-1})(t_2-t_2^{-1})\cdot{\sigma_2\sigma_1^2\sigma_2}-(t_2-t_2^{-1})(t_3+t_3^{-1})\cdot{\sigma_1\sigma_2^2\sigma_1}
\\
&-(t_1^{-1}t_3-t_1t_3^{-1})\cdot({\sigma_1^2\sigma_2^2}+{\sigma_2^2\sigma_1^2})
+(t_1^{-1}t_2t_3-t_1t_2^{-1}t_3^{-1})(t_3+t_3^{-1})\cdot{\sigma_1^2}
\\
&-(t_1+t_1^{-1})(t_1t_2t_3^{-1}-t_1^{-1}t_2^{-1}t_3)\cdot{\sigma_2^2}
-(t_1^{-2}t_3^2-t_1^2t_3^{-2})\cdot{e}
\end{align*}
which involves $7$ braids and has $22$ terms when all brackets are expanded.

It is indeed in $\mathfrak R_n$ because both
$\text{\rm(III$_\text{B}$)}, \text{\rm(III$_\text{8B}$)} \in \mathfrak R_n$ and
\begin{align*}
\text{\rm(III$_\text{7B}$)}& \cdot \sigma_1^{-1}\sigma_2^{-1}\sigma_1^{-1}
= t_1^{-1}t_3\cdot \text{\rm(III$_\text{B}$)} - (t_1t_3^{-1}-t_1^{-1}t_3)\cdot \text{\rm(III$_\text{8B}$)}
\\
&+ \frac{(t_2t_3^{-1}-t_2^{-1}t_3)\cdot \text{\rm(III$_\text{B}$)}
-(t_1t_3^{-1}-t_1^{-1}t_3)\cdot \sigma_1^{-1}\cdot \text{\rm(III$_\text{B}$)}\cdot \sigma_2}
{t_1t_2-t_1^{-1}t_2^{-1}}.
\end{align*}
\end{exam}

\section{An algebraic reduction lemma}
\label{sec:key_lem}

\begin{defn}
\label{defn:equivalence}
Let $\mathfrak I_n$ be the two-sided ideal in $\mathbb P_n B_n$
generated by $\text{\rm(II$_\text{B}$)}$ and $\text{\rm(III$_{\text{B}}$)}$.
(When $n=2$ we ignore $\text{\rm(III$_{\text{B}}$)}$.)

Two homogeneous elements of the algebra $\mathbb P_n B_n$ are \emph {equivalent modulo $\mathfrak I_n$}
(denoted by $\sim$\,) if
\begin{itemize}
\item they have the same underlying permutation, and
\item their difference is in $\mathfrak I_n$;
in other words, their difference is a $\mathbb P_n$-linear combination of elements of the form
$\beta\cdot \text{\rm(II$_\text{B}$)} \cdot\beta'$ or $\beta\cdot \text{\rm(III$_{\text{B}}$)} \cdot\beta'$
with $\beta,\beta'\in B_n$, all terms of the linear combination having the same underlying permutation.
\end{itemize}
\end{defn}

For example, by conjugation in $B_n$ we have
$\sigma_i^2 +\sigma_i^{-2} -(t_it_{i+1}+t_i^{-1}t_{i+1}^{-1})\cdot e \sim 0$ and
$(t_{i}^{-1}t_{i+1}^{-1}-t_{i}t_{i+1})\cdot
(\sigma_i\sigma_{i+1}\sigma_i^{-1} + \sigma_i^{-1}\sigma_{i+1}^{-1}\sigma_i)
+ (t_{i+1}t_{i+2}-t_{i+1}^{-1}t_{i+2}^{-1})\cdot(\sigma_i^{-1}\sigma_{i+1}\sigma_i
+ \sigma_i\sigma_{i+1}^{-1}\sigma_i^{-1})
+ (t_it_{i+2}^{-1}-t_i^{-1}t_{i+2})
(\sigma_i\sigma_{i+1}\sigma_i + \sigma_{i}^{-1}\sigma_{i+1}^{-1}\sigma_i^{-1})
\sim 0$,
for any $i$.

The following lemma is inspired by the statement of \cite[Lemma on page~460]{M1}
(where the definition of equivalence was different and the proof was not very clear).
\begin{lem}
\label{lem:reduction}
Modulo $\mathfrak I_n$, every braid $\beta\in B_n$ is
equivalent to a $\mathbb P_n$-linear combination of braids of the form
$\alpha\sigma_{n-1}^k \gamma$ with $\alpha,\gamma\in B_{n-1}$ where $k$ is $0$, $\pm1$ or $2$.
\end{lem}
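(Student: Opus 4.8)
The plan is to prove Lemma~\ref{lem:reduction} by induction on a suitable complexity measure of the braid word, using the relators $\text{(II}_\text{B})$ and $\text{(III}_\text{B})$ (together with their conjugates and left/right translates, which stay in $\mathfrak I_n$ by Proposition~\ref{prop:relator_ideal}) as rewriting rules. The target form $\alpha\sigma_{n-1}^k\gamma$ with $\alpha,\gamma\in B_{n-1}$ and $k\in\{0,\pm1,2\}$ says that, modulo $\mathfrak I_n$, every braid can be pushed into a normal form in which the top strand $n$ crosses under/over the rest essentially once (or in a single $\sigma_{n-1}^{\pm2}$ block, which $\text{(II}_\text{B})$ then lets us trade for $e$, $\sigma_{n-1}^{\pm2}$). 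So the real content is: control the number of occurrences of $\sigma_{n-1}^{\pm1}$ in a braid word.

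First I would fix a word $w$ in $\sigma_1^{\pm1},\dots,\sigma_{n-1}^{\pm1}$ representing $\beta$ and count $N(w)$, the number of occurrences of $\sigma_{n-1}^{\pm1}$. If $N(w)\le 1$ we are already in the desired form with $k=0,\pm1$. If $N(w)\ge 2$, write $w=\alpha_0\,\sigma_{n-1}^{\epsilon_1}\,\alpha_1\,\sigma_{n-1}^{\epsilon_2}\,\alpha_2\cdots$ with $\alpha_j\in B_{n-1}$ and look at the first two $\sigma_{n-1}$-blocks, i.e.\ a subword $\sigma_{n-1}^{\epsilon_1}\,\alpha_1\,\sigma_{n-1}^{\epsilon_2}$ with $\alpha_1\in B_{n-1}$. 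Since $\alpha_1$ only involves $\sigma_1,\dots,\sigma_{n-2}$, it commutes with $\sigma_{n-1}$ except for the part involving $\sigma_{n-2}$; pulling the commuting part out, the reduction step reduces to the three-strand situation on strands $n-2,n-1,n$, i.e.\ to handling words of the form $\sigma_{n-1}^{\epsilon_1}\,\sigma_{n-2}^{a}\,\sigma_{n-1}^{\epsilon_2}$ (and, when $\alpha_1$ reverses past once more, slightly longer three-strand words). Using $\text{(II}_\text{B})$ on the two-strand subalgebra generated by $\sigma_{n-1}$ I can assume each block is $\sigma_{n-1}^{\pm1}$ rather than a higher power; then $\text{(III}_\text{B})$ in its position-shifted form (the one displayed just before the Lemma, with $i=n-2$) is precisely a relation among the six braids $\sigma_{n-2}^{\pm1}\sigma_{n-1}^{\pm1}\sigma_{n-2}^{\mp1}$, $\sigma_{n-2}^{\pm1}\sigma_{n-1}^{\mp1}\sigma_{n-2}^{\pm1}$, $\sigma_{n-2}^{\pm1}\sigma_{n-1}^{\pm1}\sigma_{n-2}^{\pm1}$, with coefficients in $\mathbb P_n$ that are \emph{invertible} in $\mathbb P_n$ (they are Laurent monomials minus their inverses, not killed by $\rho$ because the variables are distinguished — this is exactly why the paper insists on $\mathbb P_n$ rather than $\mathbb P_{\mathbf T}$). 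Hence each of these six ``length-3'' three-strand words is $\sim$ a $\mathbb P_n$-linear combination of the other five; by a short case analysis I arrange that every such word is rewritten so that the number of $\sigma_{n-1}^{\pm1}$ letters \emph{strictly decreases} when counted with the right weight, or else the two $\sigma_{n-1}$'s get pushed adjacent so they fuse (via the braid relation $\sigma_{n-2}\sigma_{n-1}\sigma_{n-2}^{-1}=\sigma_{n-1}^{-1}\sigma_{n-2}\sigma_{n-1}$ one can always move an $\alpha_1$-free situation to where the two blocks combine into $\sigma_{n-1}^0,\sigma_{n-1}^{\pm2}$, handled by $\text{(II}_\text{B})$). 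Iterating, $N$ drops until it reaches $\le 2$, and the final tidy-up with $\text{(II}_\text{B})$ brings the exponent into $\{0,\pm1,2\}$.

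The bookkeeping I expect to be the main obstacle: the reduction rule coming from $\text{(III}_\text{B})$ trades one braid for several, so the naive letter-count $N(w)$ is not obviously monotone — I must choose a complexity measure (e.g.\ lexicographic on $(N(w),\ \text{total length},\ \text{something measuring how far the }\sigma_{n-1}\text{'s are from being adjacent})$) that provably decreases under each rewriting move, and check that the finitely many three-strand configurations $\sigma_{n-2}^{a}\sigma_{n-1}^{\epsilon}\sigma_{n-2}^{b}\sigma_{n-1}^{\epsilon'}$ with $a,b\in\{\pm1\}$ all admit a move decreasing it. This is the analogue of Murakami's ``Lemma on page~460'' of \cite{M1}, but with the cleaner equivalence $\sim$ of Definition~\ref{defn:equivalence}; the improvement is precisely that the coefficients in $\text{(III}_\text{B})$ are units in $\mathbb P_n$, so no division by a possibly-vanishing quantity occurs and the rewriting is genuinely well-defined. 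Once the three-strand case analysis is laid out as a finite table, the induction closes and $\beta\sim\sum_\lambda c_\lambda\,\alpha_\lambda\sigma_{n-1}^{k_\lambda}\gamma_\lambda$ as claimed.
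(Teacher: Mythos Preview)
Your overall strategy---write $\beta=\alpha_0\sigma_{n-1}^{\epsilon_1}\alpha_1\sigma_{n-1}^{\epsilon_2}\cdots$ and reduce the number of $\sigma_{n-1}$-blocks by a three-strand case analysis---is the same as the paper's. But two concrete points in your sketch are wrong and would prevent the argument from closing.

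\emph{First}, the claim that all three coefficients of $\text{(III}_\text{B})$ are units in $\mathbb P_n$ is false. With $i=n-2$ the coefficients are $t_{n-2}^{-1}t_{n-1}^{-1}-t_{n-2}t_{n-1}$, $t_{n-1}t_n-t_{n-1}^{-1}t_n^{-1}$, and $t_{n-2}t_n^{-1}-t_{n-2}^{-1}t_n$; the last one is killed by $\rho$ (it becomes $1-1=0$), so it is \emph{not} invertible in $\mathbb P_n$. Consequently you cannot simply ``solve for any one of the six three-strand words in terms of the other five'': for some configurations (in the paper's notation the pure-braid case $\sigma_2\sigma_1^2\sigma_2$) the relevant coefficient is the non-unit, and a single application of $\text{(III}_\text{B})$ does not isolate the word you want. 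The paper gets around this by setting up a $3\times3$ linear system in $\sigma_1^2(\sigma_2\sigma_1^2\sigma_2)$, $(\sigma_2\sigma_1^2\sigma_2)$, $\sigma_1^{-2}(\sigma_2\sigma_1^2\sigma_2)$ using two translates of $\text{(III}_\text{B})\cdot\sigma_1\sigma_2\sigma_1$ together with $\text{(II}_\text{B})$, and checking that the determinant equals $(t_1t_2-t_1^{-1}t_2^{-1})^2$, which \emph{is} a unit. Your ``short case analysis'' hides exactly this obstruction.

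\emph{Second}, ``pulling the commuting part out'' of $\alpha_1\in B_{n-1}$ does not reduce to a single $\sigma_{n-2}^a$. Only $\sigma_1,\dots,\sigma_{n-3}$ commute with $\sigma_{n-1}$, but they do not commute with $\sigma_{n-2}$, so a word like $\alpha_1=\sigma_{n-2}\sigma_{n-3}\sigma_{n-2}$ cannot be factored as $(\text{commuting})\cdot\sigma_{n-2}^a\cdot(\text{commuting})$ in $B_{n-1}$. What you need here is induction on $n$: by the lemma at level $n-1$, $\alpha_1$ is equivalent (mod $\mathfrak I_{n-1}\subset\mathfrak I_n$) to a $\mathbb P_{n-1}$-combination of braids $\alpha'\sigma_{n-2}^\ell\gamma'$ with $\alpha',\gamma'\in B_{n-2}$; now $\alpha',\gamma'$ genuinely commute with $\sigma_{n-1}$ and you are left with $\sigma_{n-1}^k\sigma_{n-2}^\ell\sigma_{n-1}^m$. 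The paper makes this double induction on $(n,r)$ explicit, and then carries out the nine cases $\sigma_2\sigma_1^\ell\sigma_2^m$ with $\ell,m\in\{\pm1,2\}$ one by one; five are braid identities, two follow by a single left/right translate of $\text{(III}_\text{B})$ (where the leading coefficient happens to be the unit $t_1^{-1}t_2^{-1}-t_1t_2$), and the remaining two require the linear-system trick above.
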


A braid $\beta\in B_n$ can be written as
\[
\beta=\beta_0\sigma_{n-1}^{k_1} \beta_1\sigma_{n-1}^{k_2} \dots \sigma_{n-1}^{k_r}\beta_r
\]
where $\beta_j\in B_{n-1}$ and $k_j\neq0$.
We allow that $\beta_0$ and $\beta_r$ be trivial,
but assume other $\beta_j$'s are nontrivial.
The number $r$ will be denoted as $r(\beta)$.

The lemma will be proved by an induction on the double index $(n,r)$.
Note that the lemma is trivial when $n=2$, or $r(\beta)\leq1$.

It is enough to consider the case $r=2$, because induction on $r$ works beyond $2$.
Indeed, if $r(\beta)>2$, let $\beta'=\beta_1\sigma_{n-1}^{k_2} \dots \sigma_{n-1}^{k_r}\beta_r$, then
$r(\beta')<r(\beta)$. By inductive hypothesis $\beta'$ is equivalent to a
linear combination of elements of the form $\alpha'\sigma_{n-1}^{k'} \gamma'$,
hence $\beta$ is equivalent to a linear combination of elements of the form
$\beta_0\sigma_{n-1}^{k_1} \alpha'\sigma_{n-1}^{k'} \gamma'$.
This brings the problem back to the $r=2$ case.
Henceforth we assume $r=2$.

Since the initial and terminal part of $\beta$, namely $\beta_0$ and $\beta_r$,
do not affect the conclusion of the lemma, we can drop them.
So we assume $\beta=\sigma_{n-1}^{k_1} \beta_1\sigma_{n-1}^{k_2}$, where $\beta_1\in B_{n-1}$.

By the induction hypothesis on $n$, $\beta_1\in B_{n-1}$ is a linear combination of
elements of the form $\alpha_1\sigma_{n-2}^\ell \gamma_1$.
Note that $\alpha_1,\gamma_1\in B_{n-2}$ commute with $\sigma_{n-1}$.
So it suffices to focus on braids of the form
$\beta=\sigma_{n-1}^{k}\sigma_{n-2}^{\ell}\sigma_{n-1}^{m}$.

For the sole purpose of controlling the length of displayed formulas,
we assume $n=3$ below.
The proof for a general $n$ can be obtained by a simple change of subscripts,
replacing $\sigma_1,\sigma_2$ with $\sigma_{n-2},\sigma_{n-1}$ and
replacing $t_1,t_2,t_3$ with $t_{n-2},t_{n-1},t_{n}$, respectively.

Thus, Lemma~\ref{lem:reduction} has been reduced to the following
\begin{lem}
\label{lem:key_reduction}
Every $\sigma_2^{k}\sigma_1^{\ell}\sigma_2^{m}$ is equivalent \textup{(modulo $\mathfrak I_n$)} to a linear combination
of braids of the form $\sigma_1^{k'}\sigma_2^{\ell'}\sigma_1^{m'}$ where $\ell'$ is $0$, $\pm1$ or $2$.
\end{lem}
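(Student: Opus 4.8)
The plan is to reduce the three-letter word $\sigma_2^k\sigma_1^\ell\sigma_2^m$ to the desired normal form by a double induction: first on the pair of exponents $(|k|,|m|)$ on the outer generator $\sigma_2$, and within that by using the relations in $\mathfrak I_n$ to shift $\sigma_2$-powers past $\sigma_1$-powers. The base cases are when $k=0$ or $m=0$, where the word already has the form $\sigma_1^{\ell'}\sigma_2^{\ell'}\sigma_1^{m'}$ trivially (with one of the outer $\sigma_1$-blocks empty), except that we still need $\ell'\in\{0,\pm1,2\}$; so even the ``base case'' $\sigma_1^\ell$ requires knowing that arbitrary powers $\sigma_1^\ell$ reduce. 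That sub-reduction is exactly Axiom (II$_\text{B}$) applied (via conjugation/multiplication, using Proposition~\ref{prop:relator_ideal}) in the form $\sigma_1^\ell\sim(\text{linear combination of }\sigma_1^{\ell'}$ with $\ell'\in\{0,\pm1,2\})$: from $\sigma_1^2+\sigma_1^{-2}\sim(t_1t_2+t_1^{-1}t_2^{-1})\,e$ one multiplies by powers of $\sigma_1$ to get a recursion expressing $\sigma_1^{\ell}$ for $|\ell|\geq3$ in terms of $\sigma_1^{\ell-2},\sigma_1^{\ell-4}$ (and symmetrically for $\ell\le-3$), which telescopes down to exponents in $\{0,\pm1,2\}$. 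So the first thing I would record is this ``one-variable'' lemma: any $\sigma_i^\ell$ is $\sim$ a $\mathbb P_n$-linear combination of $\sigma_i^0,\sigma_i^{\pm1},\sigma_i^2$.

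Next, using that one-variable lemma on the middle block, I may assume $\ell\in\{0,\pm1,2\}$ in $\sigma_2^k\sigma_1^\ell\sigma_2^m$. If $\ell=0$ we are reduced to $\sigma_2^{k+m}$, again handled by the one-variable lemma (and this lies in $B_{2}\subset B_{n-1}$-type form, fine). If $\ell=2$, I would use (II$_\text{B}$)-type relation $\sigma_1^2\sim(t_1t_2+t_1^{-1}t_2^{-1})e-\sigma_1^{-2}$ to trade $\sigma_1^2$ for a scalar plus $\sigma_1^{-2}$, reducing to the cases $\ell=0$ and $\ell=-1$ after a further symmetric manoeuvre — or more cleanly, I would treat $\ell=\pm1$ as the essential case and reduce $\ell=2$ to it by the braid relation $\sigma_2\sigma_1\sigma_2=\sigma_1\sigma_2\sigma_1$ applied twice, $\sigma_2^k\sigma_1^2\sigma_2^m = \sigma_2^{k-1}(\sigma_2\sigma_1\sigma_2)\sigma_1^{-1}(\sigma_2\sigma_1\sigma_2)\sigma_2^{-1}\cdots$, rewriting the two $\sigma_2\sigma_1\sigma_2=\sigma_1\sigma_2\sigma_1$ and pushing $\sigma_2$'s outward. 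The heart of the matter is therefore $\sigma_2^k\sigma_1^{\pm1}\sigma_2^m$; here the plan is to use the braid relation to move one $\sigma_2$ at a time across the single $\sigma_1^{\pm1}$: e.g. $\sigma_2\sigma_1\sigma_2^m = \sigma_2\sigma_1\sigma_2\cdot\sigma_2^{m-1} = \sigma_1\sigma_2\sigma_1\cdot\sigma_2^{m-1}$, so a single crossing moves a $\sigma_2$ from the right block to the left block, strictly decreasing $|m|$ while keeping the word in the shape $\sigma_1^{\text{stuff}}\sigma_2^{\text{stuff}}\sigma_1^{\text{stuff}}$ — but only for one sign of the crossing; for $\sigma_1^{-1}$ one uses $\sigma_2^{-1}\sigma_1^{-1}\sigma_2^{-1}=\sigma_1^{-1}\sigma_2^{-1}\sigma_1^{-1}$ instead, and for mixed signs of $k,m$ one must use (III$_\text{B}$) to convert, say, $\sigma_1^{-1}\sigma_2\sigma_1$ into a combination of $\sigma_1\sigma_2\sigma_1,\sigma_1^{-1}\sigma_2^{-1}\sigma_1^{-1}$ and the other triples, which is precisely what (III$_\text{B}$) is built to do.

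The main obstacle will be the bookkeeping when $k$ and $m$ have opposite signs, because then the pure braid-relation move is blocked (it requires a $\sigma_2^{\pm1}$ adjacent to $\sigma_1$ of the compatible sign) and one is forced to invoke (III$_\text{B}$); after applying (III$_\text{B}$) one obtains several new three-letter words $\sigma_2^{k'}\sigma_1^{\ell'}\sigma_2^{m'}$, and I must check that the induction parameter — I propose $(|k|+|m|,\ |k|)$ ordered lexicographically, or simply $|k|+|m|$ — genuinely decreases (or is controlled by the one-variable lemma) for every term produced. Concretely, (III$_\text{B}$) rewritten and multiplied on left/right by suitable $\sigma_2$-powers expresses $\sigma_2^{-1}\sigma_1\sigma_2^{\,}$ etc. in terms of words where the $\sigma_1$-exponent is $\pm1$ but the two $\sigma_2$ exponents now have equal signs (or are absorbed), at the cost of scalar coefficients in $\mathbb P_n$ — and one must confirm these scalars are actually in $\mathbb P_n$, i.e. persistent, which they are since the only denominators that could appear, like $t_1t_2-t_1^{-1}t_2^{-1}$, are already shown persistent in the examples above (they do not vanish under $\rho$). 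So the proof is: (a) one-variable lemma via (II$_\text{B}$); (b) normalize the middle exponent to $\{0,\pm1,2\}$ and dispatch $\ell\in\{0,2\}$; (c) for $\ell=\pm1$, induct on $|k|+|m|$, using the braid relation when a compatible-sign crossing is available and (III$_\text{B}$) (left/right multiplied and solved for the offending triple) otherwise, verifying at each step that all resulting triples have strictly smaller $|k|+|m|$ or fall under the one-variable lemma, with all coefficients persistent.
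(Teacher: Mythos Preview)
Your outline has a genuine gap at the case $\ell=2$, and this is precisely the hardest case in the lemma. You offer two ways to dispose of $\sigma_2^{k}\sigma_1^{2}\sigma_2^{m}$: trade $\sigma_1^{2}$ for $\sigma_1^{-2}$ via (II$_\text{B}$), or manipulate braid relations. Neither works. The relator (II$_\text{B}$) only cycles you among $\ell\in\{-2,0,2\}$ and never reaches $\ell=\pm1$. And the braid identity you sketch is not valid: the pure braid $\sigma_2\sigma_1^{2}\sigma_2$ is simply \emph{not equal in $B_3$} to any word $\sigma_1^{a}\sigma_2^{b}\sigma_1^{c}$ (one checks this in $P_3/Z\cong F_2$), so no sequence of braid relations will put it into the target shape. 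You must invoke (III$_\text{B}$), and the way it enters is not the naive one. The paper right-multiplies (III$_\text{B}$) by $\sigma_1\sigma_2\sigma_1$ to get a relation (III$'$) among pure braids that contains both $\sigma_2\sigma_1^{2}\sigma_2$ and $\sigma_1^{2}(\sigma_2\sigma_1^{2}\sigma_2)$; combining (III$'$), its left-multiple by $\sigma_1^{-2}$, and (II$_\text{B}$) gives a $3\times3$ linear system in the three ``unknowns'' $\sigma_1^{\pm2}(\sigma_2\sigma_1^{2}\sigma_2)$ and $\sigma_2\sigma_1^{2}\sigma_2$, whose determinant is computed to be $(t_1t_2-t_1^{-1}t_2^{-1})^{2}$, hence invertible in $\mathbb P_n$. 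This is the crux of the proof and your plan misses it entirely.

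A smaller point: even for $\ell=\pm1$, your induction on $|k|+|m|$ is not visibly well-founded. After one application of the braid relation, say $\sigma_2^{k}\sigma_1\sigma_2^{m}=\sigma_2^{k-1}\sigma_1\sigma_2\sigma_1\sigma_2^{m-1}$, the word has \emph{more} syllables, not fewer, and you have not explained which quantity actually decreases. The paper avoids this by first using (II$_\text{B}$) to force $k,\ell,m\in\{-1,1,2\}$, then peeling one $\sigma_2$ from the left to reduce to $k=1$, and finally checking the nine cases $\sigma_2\sigma_1^{\ell}\sigma_2^{m}$ one by one: five are honest braid identities, $\sigma_2\sigma_1^{-1}\sigma_2$ and $\sigma_2\sigma_1^{-1}\sigma_2^{2}$ come from a single left/right multiplication of (III$_\text{B}$), and $\sigma_2\sigma_1^{2}\sigma_2$, $\sigma_2\sigma_1^{2}\sigma_2^{2}$ need the linear-system argument above.
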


\begin{proof}
Modulo $\text{(II$_\text{B}$)}$, we may restrict the exponent
$k$ to take values $1$, $2$ and $3$ (we are done if $k$ is $0$).
If $k>1$ we can decrease $k$ by looking at
$\sigma_2^{k-1}(\sigma_2\sigma_1^{\ell}\sigma_2)$, so it suffices to prove the case $k=1$.
Again modulo $\text{(II$_\text{B}$)}$, we can restrict the exponents $\ell, m$ to the values $\pm1$ and $2$.
There are altogether 9 cases to verify.

{\it 5 trivial cases (braid identities) }:
\begin{alignat*}{3}
&\sigma_2\sigma_1\sigma_2=\sigma_1\sigma_2\sigma_1, \quad
&&\sigma_2\sigma_1\sigma_2^{-1}=\sigma_1^{-1}\sigma_2\sigma_1, \quad
&&\sigma_2\sigma_1^{-1}\sigma_2^{-1}=\sigma_1^{-1}\sigma_2^{-1}\sigma_1, \quad
\\
&\sigma_2\sigma_1\sigma_2^2=\sigma_1^2\sigma_2\sigma_1, \quad
&&\sigma_2\sigma_1^2\sigma_2^{-1}=\sigma_1^{-1}\sigma_2^2\sigma_1. \quad
\end{alignat*}

{\it The case $\sigma_2\sigma_1^{-1}\sigma_2$ }:
Multiplying $\text{\rm(III$_{\text{B}}$)}$ by $\sigma_2$ on the right and $\sigma_1^{-1}$ on the left,
and taking braid identities into account, we get the relation
\begin{align*}
(&t_1^{-1}t_2^{-1}-t_1t_2)\cdot (\sigma_2\sigma_1^{-1}\sigma_2
+ \sigma_1^{-1}\sigma_2\sigma_1^{-1})
+ (t_1t_3-t_1^{-1}t_3^{-1})\cdot
\\
&\cdot(\sigma_1^{-1}\sigma_2\sigma_1
+ \sigma_1\sigma_2^{-1}\sigma_1^{-1})
+ (t_2t_3^{-1}-t_2^{-1}t_3)\cdot(\sigma_1\sigma_2\sigma_1
+ \sigma_1^{-1}\sigma_2^{-1}\sigma_1^{-1})
\sim 0.
\end{align*}
The coefficient of $\sigma_2\sigma_1^{-1}\sigma_2$ is $t_1^{-1}t_2^{-1}-t_1t_2$
which is invertible in $\mathbb P_n$.
Divide through by this coefficient, then $\sigma_2\sigma_1^{-1}\sigma_2$ is equivalent to a linear combination of
braids of the form $\sigma_1^{\pm1}\sigma_2\sigma_1^{\pm1}$ and $\sigma_1^{\pm1}\sigma_2^{-1}\sigma_1^{-1}$.
So the case $\sigma_2\sigma_1^{-1}\sigma_2$ is verified.

{\it The case $\sigma_2\sigma_1^{-1}\sigma_2^2$ }:
Multiplying the previous relation by $\sigma_2$ on the right,
and taking braid identities into account, we see that
\begin{align*}
(&t_1^{-1}t_2^{-1}-t_1t_2)\cdot \left\{\sigma_2\sigma_1^{-1}\sigma_2^2
+ \sigma_1^{-1}(\sigma_2\sigma_1^{-1}\sigma_2)\right\}
+ (t_1t_3-t_1^{-1}t_3^{-1}) \cdot
\\
&\cdot (\sigma_2\sigma_1 + \sigma_1^2\sigma_2^{-1}\sigma_1^{-1})
+ (t_2t_3^{-1}-t_2^{-1}t_3)\cdot(\sigma_1^2\sigma_2\sigma_1
+ \sigma_2^{-1}\sigma_1^{-1})
\sim 0.
\end{align*}
Similarly to the above case, this reduces $\sigma_2\sigma_1^{-1}\sigma_2^2$
to the verified case $\sigma_2\sigma_1^{-1}\sigma_2$.

{\it The case $\sigma_2\sigma_1^2\sigma_2$ }:
Multiplying $\text{\rm(III$_{\text{B}}$)}$ on the right by $\sigma_1\sigma_2\sigma_1$, we get
\[
\begin{aligned}
(t_1^{-1}t_2^{-1}&-t_1t_2)\cdot(\sigma_1\sigma_2^2\sigma_1
+ \sigma_2^2)
+ (t_2t_3-t_2^{-1}t_3^{-1})\cdot (\sigma_2\sigma_1^2\sigma_2+ \sigma_1^2)
\\
&+ (t_1t_3^{-1}-t_1^{-1}t_3)\cdot(\sigma_1^2(\sigma_2\sigma_1^2\sigma_2) + e)
\sim 0.
\end{aligned}
\tag*{$\text{\rm(III$'$)}$}
\]
This is a relation among pure braids.
We have the following set of linear equations, where
the first line is $\text{\rm(III$'$)}$,
the second line is $\text{\rm(III$'$)}$ multiplied by $\sigma_1^{-2}$ on the left,
the third line is nothing but the relator $\text{\rm(II$_\text{B}$)}$,
and $K_1, K_2$ are linear combinations of braids of the form $\sigma_1^{k'}\sigma_2^{\ell'}\sigma_1^{m'}$.
\[
\left\{
\begin{aligned}
&(t_1t_3^{-1}-t_1^{-1}t_3)\cdot \sigma_1^2(\sigma_2\sigma_1^2\sigma_2)
+(t_2t_3-t_2^{-1}t_3^{-1})\cdot (\sigma_2\sigma_1^2\sigma_2)
\sim K_1,
\\
&(t_1t_3^{-1}-t_1^{-1}t_3)\cdot (\sigma_2\sigma_1^2\sigma_2)
+(t_2t_3-t_2^{-1}t_3^{-1})\cdot \sigma_1^{-2}(\sigma_2\sigma_1^2\sigma_2)
\sim K_2,
\\
&\sigma_1^2(\sigma_2\sigma_1^2\sigma_2)
-(t_1t_2+t_1^{-1}t_2^{-1})\cdot (\sigma_2\sigma_1^2\sigma_2)
+\sigma_1^{-2}(\sigma_2\sigma_1^2\sigma_2)
\sim 0.
\end{aligned}
\right.
\]
The determinant
\[
\det\begin{pmatrix}
t_1t_3^{-1}-t_1^{-1}t_3 & t_2t_3-t_2^{-1}t_3^{-1} & 0
\\
0 & t_1t_3^{-1}-t_1^{-1}t_3 & t_2t_3-t_2^{-1}t_3^{-1}
\\
1 & -t_1t_2-t_1^{-1}t_2^{-1} & 1
\end{pmatrix}
=(t_1t_2-t_1^{-1}t_2^{-1})^2
\]
is invertible in $\mathbb P_n$.
So solving these equations in $\mathbb P_n B_n$ we see that
$\sigma_2\sigma_1^2\sigma_2$ is equivalent to a linear combination of braids
of the form $\sigma_1^{k'}\sigma_2^{\ell'}\sigma_1^{m'}$, as desired.

{\it The case $\sigma_2\sigma_1^2\sigma_2^2$ }:
Multiplying $\text{\rm(III$'$)}$ by $\sigma_2$ on the right,
we get
\begin{align*}
(t_1^{-1}t_2^{-1}&-t_1t_2)\cdot
(\sigma_1^2\sigma_2\sigma_1^2 + \sigma_2^3 )
+ (t_2t_3-t_2^{-1}t_3^{-1})\cdot
(\sigma_2\sigma_1^2\sigma_2^2 + \sigma_1^2\sigma_2 )
\\
&+ (t_1t_3^{-1}-t_1^{-1}t_3)\cdot
(\sigma_1\sigma_2\sigma_1^3\sigma_2\sigma_1
+ \sigma_2 )
\sim 0.
\end{align*}
Since $\sigma_2\sigma_1^3\sigma_2^2$ reduces by $\text{\rm(II$_\text{B}$)}$ to the trivial case
$\sigma_2\sigma_1\sigma_2^2$ and the verified case $\sigma_2\sigma_1^{-1}\sigma_2^2$,
and the coefficient of $\sigma_2\sigma_1^2\sigma_2^2$ is invertible in $\mathbb P_n$,
the case $\sigma_2\sigma_1^2\sigma_2^2$ is also verified.

We have verified all 9 cases.
Modulo $\text{\rm(II$_\text{B}$)}$ we can assume $\ell'\in\{0,\pm1,2\}$.
Thus Lemma~\ref{lem:key_reduction} is proved.

The inductive proof of Lemma~\ref{lem:reduction} is now complete.
\end{proof}

The resulting $\mathbb P_n$-linear combination of braids of the form
$\alpha\sigma_{n-1}^k \gamma$ with $\alpha,\gamma\in B_{n-1}$ in the Lemma is not unique,
but the inductive proof gives us a recursive algorithm to find one.

\section{Proof of the Main Theorem}
\label{sec:proof_Main}

It is clear from Theorem~\ref{thm:relation(III_6)}
that the Conway potential function $\nabla_L$ satisfies all these axioms.
So, the existence of an invariant satisfying these axioms is already known.
The focus is the uniqueness.

In the rest of this section, we forget about the original definition of CPF, and
regard the symbol $\nabla$ as a well-defined invariant of colored links
which satisfies the five axioms listed in the statement of the Main Theorem.
We shall show that such a $\nabla$ is computable,
hence uniquely determined because we have assumed that $\nabla$ is well-defined.

Observe that for such a $\nabla$, everything in Subsections~\ref{subsec:skein_relators} and
\ref{subsec:properties_of_relators}
remains valid if we replace the ideal $\mathfrak R_n$ with the ideal $\mathfrak I_n$
defined in Definition~\ref{defn:equivalence},
and replace skein relators with homogeneous elements of $\mathfrak I_n$.

We will say a colored link $L$ is \emph{computable} if its $\nabla$ is computable.
By Proposition~\ref{prop:colored_Markov}, we may take $L$ to be the closure of a closable colored braid.
It suffices to prove the following claim for all $n$.

{\sc Inductive Claim($n$).}
For every closable colored $n$-braid $\beta$,
the closure $\wh \beta$ is computable.

The proof of Claim($n$) is by induction on $n$.
When $n=1$, Claim($1$) is true because there is only one $1$-braid,
with color symbol $t_1$.
Its closure is the trivial knot, whose $\nabla$ must be $(t_1-t_1^{-1})^{-1}$ by Axioms ($\Phi$) and (H).

Now assume inductively that Claim($n-1$) is true, we shall prove that Claim($n$) is also true.

Suppose $\beta$ is a closable colored $n$-braid.
By Lemma~\ref{lem:reduction}, the braid $\beta\in B_n$ is equivalent to (in a computable way)
a $\mathbb P_n$-linear combination of
braids of the form $\alpha\sigma_n^k \gamma$ with $\alpha,\gamma\in B_{n-1}$ and $k\in\{0,\pm1,2\}$.
Color these latter braids in the same way as for $\beta$.
Clearly they are all closable because they have the same underlying permutation.
By Example~\ref{exam:relator_vs_relation} the (mod $\mathfrak I_n$) equivalence preserves
the $\nabla$ of closure of closable colored braids.
So the computation of $\nabla_{\wh\beta}$ is reduces to the computation of
(a $\mathbb P_{\mathbf T}$-linear combination of) $\nabla_{\wh\beta'}$,
where $\beta'$ is of the form $\alpha\sigma_{n-1}^k \gamma$
with $\alpha,\gamma$ not involving $\sigma_{n-1}$ and $k\in\{0,\pm1,2\}$.

If $k=0$, the link $\wh\beta'$ has a free circle.
So its $\nabla$ must be $0$ by axiom (IO).

If $k=2$, the link $\wh\beta'$ is the
link $\wh {\alpha\gamma}$ (regarded as a closed $(n-1)$-braid)
with a ring attached to the last strand.
The latter link is computable by the inductive hypothesis Claim($n-1$),
so the former is also computable by Axiom ($\Phi$).

If $k=\pm1$, by Proposition~\ref{prop:colored_Markov} the link $\wh\beta'$ is isotopic to
the link $\wh {\alpha\gamma}$ (regarded as a closed $(n-1)$-braid)
which is computable by the inductive hypothesis Claim($n-1$).
So the former is also computable.

Thus Claim($n$) is proved.

The induction on $n$ is now complete.
Hence $\nabla$ is computable for every closed colored braid.
\qed

\begin{rem}
The induction above, together with the reduction argument of Sections \ref{sec:key_lem},
provides a recursive algorithm for computing $\nabla_{\wh\beta}$.
\end{rem}

\begin{rem}
A remarkable feature of this algorithm is that
it never increases the number of components of links.
In fact, all the reductions in Section~\ref{sec:key_lem} are by
Axioms (II) and (III) which respect the components,
while in this Section, components could get removed
but never added, by Axioms (IO) and ($\Phi$).
So if we start off with a knot, we shall always get knots along the way,
the Axioms (IO) and ($\Phi$) becoming irrelevant.
Hence the Corollary for Knots stated in the Introduction.
\end{rem}

\begin{rem}
The above proof of the Main Theorem does not show that the ideal $\mathfrak I_n$
is equal to the relator ideal $\mathfrak R_n$.
Certainly $\mathfrak I_n \subset \mathfrak R_n$.
It would be interesting to know whether the two relators
$\text{\rm(II$_\text{B}$)}$ and $\text{\rm(III$_{\text{B}}$)}$
are sufficient to generate the relator ideal $\mathfrak R_n$ in $\mathbb P_n B_n$, for every $n$.
\end{rem}

\end{document}